\documentclass[10pt]{article}
\usepackage[
  margin=3cm,
  includefoot,
  footskip=30pt,
]{geometry}

\usepackage[T1]{fontenc}
\usepackage{textcomp}

\usepackage{libertinus}

\usepackage{url}
\usepackage{fullpage}
\usepackage{diagbox}

\usepackage{natbib}

\usepackage{amsthm,amsfonts,amsmath,amssymb,epsfig,color,float,graphicx,verbatim,bm,bbm}
\usepackage{enumerate}
\usepackage{enumitem}
\usepackage{wrapfig}
\usepackage{subcaption}
\usepackage[export]{adjustbox}
\usepackage{nicefrac}
\usepackage{hhline}
\usepackage{multicol}
\usepackage{multirow}
\usepackage[dvipsnames]{xcolor}

\usepackage[ruled, vlined, linesnumbered]{algorithm2e}

\usepackage{hyperref}
\hypersetup{
	colorlinks   = true, 
	urlcolor     = blue!75!black, 
	linkcolor    = blue!75!black, 
	citecolor   = blue!75!black 
}

\newtheorem{theorem}{Theorem}[section]

\newtheorem*{question*}{Question}

\newtheorem*{bigquestion*}{Big Question}

\newtheorem{lemma}[theorem]{Lemma}
\newtheorem{corollary}[theorem]{Corollary}

\newtheorem{definition}[theorem]{Definition}
\newtheorem*{definition*}{Definition}
\newtheorem{remark}[theorem]{Remark}

\newtheorem*{keyquestion*}{Key Question}

\renewcommand{\eqref}[1]{Eq.~(\ref{#1})}

\newcommand{\reals}{\mathbb{R}}

\newcommand{\E}{\mathbb{E}}

\newcommand{\half}{\frac{1}{2}}

\newcommand{\bzero}{\mathbf{0}}
\newcommand{\dist}{\mathrm{dist}}
\renewcommand{\SS}{\mathbb{S}}
\newcommand{\NN}{\mathbb{N}}

\newcommand{\ba}{\mathbf{a}}
\newcommand{\be}{\mathbf{e}}

\newcommand{\bx}{\mathbf{x}}
\newcommand{\x}{\mathbf{x}}
\newcommand{\bw}{\mathbf{w}}

\newcommand{\bg}{\mathbf{g}}
\newcommand{\g}{\mathbf{g}}

\newcommand{\bu}{\mathbf{u}}

\newcommand{\bv}{\mathbf{v}}

\newcommand{\bz}{\mathbf{z}}

\newcommand{\by}{\mathbf{y}}
\newcommand{\y}{\mathbf{y}}

\newcommand{\Ocal}{\mathcal{O}}

\newcommand{\Fcal}{\mathcal{F}}

\newcommand{\Pcal}{\mathcal{P}}

\newcommand{\Xcal}{\mathcal{X}}

\newcommand{\norm}[1]{\|#1\|}

\newcommand{\inner}[1]{\langle#1\rangle}
\newcommand{\binner}[1]{\left\langle#1\right\rangle}

\newcommand{\B}{\mathbb{B}}
\newcommand{\BB}{\mathbb{B}}

\newcommand{\conv}{\mathrm{conv}}
\newcommand{\poly}{\mathrm{poly}}

\newcommand{\eps}{\epsilon}

\newcommand{\Vcal}{\mathcal{V}}

\newcommand{\Bcal}{\mathcal{B}}
\newcommand{\Kcal}{\mathcal{K}}

\title{The Exponential Price of Determinism\\in Nonsmooth Nonconvex Optimization}
\author{Guy Kornowski}
\date{}

\begin{document}

\maketitle

\begin{abstract}
We study the complexity of finding $(\delta,\epsilon)$-Goldstein stationary points of nonsmooth nonconvex Lipschitz functions.
By now, it is known that randomized first-order algorithms can solve this task with a dimension-free oracle complexity \citep{zhang2020complexity}, whereas deterministic algorithms cannot, as their complexity must scale at least linearly with the dimension $d$ \citep{jordan2023deterministic,tian2024no}.
This leaves open whether deterministic algorithms can nevertheless solve the problem with oracle complexity polynomial in $d$.
We answer this question negatively by proving a lower bound of order $(1/\eps)^{\Omega(d)}$ for deterministic algorithm, closing the exponential gap between the previously known lower and upper bounds and resolving an open problem posed by \citet{jordan2023deterministic}.
We further discuss several extensions and implications of this result to weaker stationarity notions, finding a descent direction and deterministic smoothing.
Overall, our results establish an exponential computational advantage in nonsmooth nonconvex optimization offered by randomization.
\end{abstract}

\section{Introduction}

We consider optimization problems associated with an objective $f:\reals^d\to\reals$ which is Lipschitz continuous, but not necessarily smooth or convex. Such problems have received significant interest in recent years due to their ubiquity in machine learning related applications, where losses given by deep neural networks are generally nonsmooth and nonconvex.
Due to classic impossibility results that deem such problems generally intractable \citep{Nemirovski-1983-Problem,murty1987some}, local stationarity-based relaxations are commonly used to analyze convergence \citep{ghadimi2013stochastic,carmon2020lower}. Even so, for nonsmooth problems, such local relaxations must be defined quite carefully. Indeed, while subgradient methods converge to stationary points under mild conditions \citep{davis2020stochastic}, reaching iterates with small subgradient norm in their proximity
may require a number of queries which is exponential in the dimension $d$ \citep{kornowski2022oracle}.

In the groundbreaking work by \citet{zhang2020complexity}, the authors consider an approximate stationarity condition based on the Goldstein subdifferential \citep{goldstein1977optimization}, coined $(\delta,\eps)$-Goldstein stationarity, which requires that there exists a convex combination of gradients within a $\delta$-ball whose norm is at most $\eps$. \citet{zhang2020complexity} provided randomized first-order algorithms that converge to such points at a dimension-free rate, a result which was refined by \citet{Tian-2022-Finite,davis2022gradient}, and improved for stochastic problems by \citet{cutkosky2023optimal}.
This subsequently led to a surge of follow-up work which extended the Goldstein-stationarity framework
to
other nonsmooth nonconvex optimization settings such as zero-order \citep{lin2022gradient,chen2023faster,kornowski2024algorithm}, constrained \citep{grimmer2025goldstein,liu2024zeroth},
compositional \citep{liu2024gradient}, private \citep{zhang2023private,kornowski2025improved}, decentralized \citep{lin2024decentralized}, minimax \citep{shi2026nonsmooth},
and higher-order variants \citep{guan2026computing}.

Interestingly, algorithms considered under this framework are randomized, in sharp contrast to standard algorithms in smooth or convex optimization.
\citet{jordan2023deterministic,tian2024no} proved that this is due to an inherent obstacle: while randomized algorithms can converge to Goldstein-stationary points at a rate which is independent of the dimension $d$, any deterministic algorithm must have complexity at least linear in $d$. This established that determinism has a ``price'' in nonsmooth nonconvex optimization, which contrasts with smooth or convex optimization regimes, in which optimal algorithms are known to be deterministic.

However, the results of \citet{jordan2023deterministic,tian2024no} both suffer from the same shortcoming: 
they proved a \emph{linear}-in-$d$ lower bound for deterministic algorithms, while there is in fact no known deterministic algorithm for this task better than an exhaustive grid search,
leaving open an \emph{exponential} gap between known upper and lower bounds.
This problem was raised by \citet{jordan2023deterministic}, who asked: \textit{``Is there a deterministic first-order algorithm for nonsmooth nonconvex optimization that returns a $(\delta,\eps)$-Goldstein stationary point using $\poly(d,\delta^{-1},\eps^{-1})$ oracle calls?''}, a question that has remained open.

There are several reasons why this exponential gap is unsatisfying.
Beyond theoretical interest,
many applications of the Goldstein-stationarity framework such as various zero-order optimization settings, including most of the previously mentioned works, are known to require $\Omega(d)$ queries with or without randomization, deeming the $\Omega(d)$ lower bound for deterministic algorithms uninformative.
In such applications,
it remains unclear whether randomization is helpful at all, yet no algorithm in the current literature can do without it.

In this work, we resolve this problem, and prove that all deterministic local algorithms, a general class of iterative algorithms which contains deterministic zero- and first-order algorithms, require $T\gtrsim (1/\eps)^{\Omega(d)}$ oracle calls in order to find a Goldstein-stationary point.
This exponentially improves over the previous lower bounds for this task,
and rules out the existence of any polynomial algorithm for finding Goldstein-stationary points deterministically, thus providing a firm negative answer to the question raised by \citet{jordan2023deterministic}.
We thus establish the exponential overhead of deterministic algorithms in finding Goldstein-stationary points of nonsmooth nonconvex objectives, or equivalently, the exponential advantage offered by randomization.

The paper proceeds by introducing relevant definitions in Section~\ref{sec: prelim}, stating the main result in Section~\ref{sec: main thm} along with a proof sketch, and presenting the full proof in Section~\ref{sec: proof}.
In Section~\ref{sec: discuss} we discuss some extensions of our result and further implications to related problems.

\section{Preliminaries} \label{sec: prelim}

\paragraph{Notation.}
We let boldfaced letters (e.g., $\bx$) denote vectors, $\bzero$ is the zero vector in $\reals^d$
(where $d$ is clear from context), and $\be_1$ is the first standard basis vector. $\|\cdot\|$ will denote the Euclidean norm. $\SS^{d-1}\subset\reals^d$ denotes the unit sphere, $\BB^d(\bx,r):=\{\bz\in\reals^d:\|\bz-\bx\|\leq r\}$ is a closed ball, and we let $\BB^d:=\BB^d(\bzero,1)$.
We denote $[T]:=\{1,2,\dots,T\}$, $\dist(\bzero,A):=\inf_{\ba\in A}\|\ba\|$.

\paragraph{Nonsmooth Optimization.}

A function $f:\reals^d\to\reals$ is called $L$-Lipschitz if for any $\x,\y\in\reals^d:|f(\x)-f(\y)|\leq L\norm{\x-\y}$. We denote the function class of interest by
\[
\Fcal^d(L,\Delta):=\{f:\reals^d\to\reals~:~f\text{ is }L\text{-Lipschitz},~f(\bzero)-\inf f\leq\Delta\}~.
\]
By Rademacher's theorem, Lipschitz functions are differentiable almost everywhere (with respect to Lebesgue measure). Hence, for any function $f\in\Fcal^d(L,\Delta)$ and point $\x\in\reals^d$, the Clarke subdifferential \citep{Clarke-1990-Optimization} can be defined as
\[
\partial f(\x):=\conv\{\g\,:\,\g={\lim}_{n\to\infty}\nabla f(\x_n),\,\x_n\to \x\}~,
\]
namely, the convex hull of all limit points of $\nabla f(\x_n)$ over all sequences of differentiable points which converge to $\x$.
Given $\delta\geq 0$ the Goldstein $\delta$-subdifferential \citep{goldstein1977optimization} of $f$ at $\x$ is the set
\[
\partial_{\delta}f(\x):=\conv\left({\bigcup}_{\y\in \B^d(\bx,\delta)}\partial f(\y)\right)~,
\]
namely, all convex combinations of Clarke subgradients at points in the $\delta$-ball around $\bx$.

\begin{definition}
A point $\bx$ is called a $(\delta,\eps)$-Goldstein stationary point of $f$ if $\dist(\bzero,\partial_{\delta} f(\x))\leq\eps$.
\end{definition}

\paragraph{Local Algorithms.}
We consider iterative algorithms, in the standard oracle complexity framework \citep{Nemirovski-1983-Problem}:
At each time step $t\in[T]$, an algorithm produces $\bx_t$ based on previously observed oracle responses $(\Ocal_f(\bx_1),\dots,\Ocal_f(\bx_{t-1}))$,
and receives the oracle response $\Ocal_f(\bx_t)$.
An oracle is called \emph{local} if for any $\bx$ and any two functions $f,g$ that are equal over some neighborhood of $\bx$, it holds that $\Ocal_{f}(\bx)=\Ocal_{g}(\bx)$.
A canonical example is the first-order oracle $\Ocal_{f}(\bx)=(f(\bx),\partial f(\bx))$,\footnote{For the sake of proving a lower bound, assuming an algorithm can access the entire subdifferential, even though this is typically not the case, only makes the result stronger.} but one can consider other oracles such as those which return derivatives of higher orders, whenever they exist.

\section{Main Theorem} \label{sec: main thm}

We now present the main result of this paper.

\begin{theorem} \label{thm: main}
Let $L,\Delta>0$.
For any $\delta<{\Delta}/{L},~\eps<{L}/{32}$
and $d\geq \lceil 2\log_2(L/8\eps)\rceil$,
no deterministic local algorithm applied to $f\in\Fcal^d(L,\Delta)$ can guarantee that any of its $T$ iterates is a $(\delta,\eps)$-Goldstein stationary point of $f$,
unless
\[
T>\left(\frac{L}{32\eps}\right)^{d/2}~.
\]
\end{theorem}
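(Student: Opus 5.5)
We will argue by a resisting-oracle (adversarial) construction. Start from a base function $f_0$ that has no $(\delta,\eps)$-Goldstein stationary points anywhere in the relevant region. The natural choice is the linear function $f_0(\x)=-L'\inner{\be_1,\x}$ with $L'$ slightly below $L$, suitably truncated so that $f_0(\bzero)-\inf f_0\le\Delta$. Because $f_0$ is smooth with gradient norm $L'>\eps$, every point is non-stationary.

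Fix a deterministic local algorithm and run it on $f_0$. This produces queries $\x_1,\dots,\x_T$, and these queries are fixed in advance, since the oracle answers are fully determined by $f_0$. We then build a hard function $f$ that coincides with $f_0$ on a neighborhood of every query. By locality, the algorithm produces exactly the same queries on $f$. At the same time, $f$ will have no $(\delta,\eps)$-stationary point at any $\x_t$. This is the easy half, because $f$ near the queries looks like $f_0$. The content lies in designing $f$ so that, within each $\delta$-ball around the queries, Goldstein stationarity fails, while $f$ remains $L$-Lipschitz with range at most $\Delta$. The condition $\delta<\Delta/L$ is what allows the range budget to be met over the relevant scale.

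The key idea is that a point is Goldstein stationary exactly when the $\delta$-ball around it contains gradients whose convex hull comes close to $\bzero$. The construction therefore goes the other way: $f$ must not be stationary at the queried points, but it must differ from $f_0$ somewhere and still lie in $\Fcal^d(L,\Delta)$. The difficulty is a counting one. The algorithm fails on $f$ as long as no query lands where $f$ creates stationarity, so we must exhibit a family of roughly $(L/32\eps)^{d/2}$ candidate functions with disjoint ``stationary regions''. Any $T$ queries can then hit at most $T$ of these regions, and some member of the family has no query near its stationary set.

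Concretely, we place a direction-dependent modification. Take a packing $\{\bu_i\}$ of the sphere $\SS^{d-1}$ (or of a cap around $\be_1$) at angular scale $\theta\approx\sqrt{\eps/L}$, giving about $(L/\eps)^{(d-1)/2}$ directions. For each $i$, define
\[
f_i(\x)=\max\{f_0(\x),\,h_i(\x)\},
\]
where $h_i$ is a function whose gradient near a point $\y_i$ combines with $-L'\be_1$ to produce a convex combination of norm $\le\eps$, yet is active only in a thin cone or slab around $\bu_i$.

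The packing scale should be dictated by the following fact. Two unit gradients whose convex hull reaches within $\eps$ of $\bzero$ must be nearly antipodal, up to an angle of order $\eps/L$. Meanwhile, a ``ridge'' along direction $\bu_i$ that bends $f_0$ creates a Goldstein-stationary point only if the ball actually touches the ridge. Choosing the ridges so that their stationary regions are pairwise disjoint and all lie at depth where $f$ still satisfies $\inf f\ge f(\bzero)-\Delta$, we pick $i$ whose region avoids all $T$ queries together with their $\delta$-neighborhoods. This is possible whenever $T<$ (number of disjoint regions) $\approx (L/32\eps)^{d/2}$. The constant $32$ and the requirement $d\ge 2\log_2(L/8\eps)$ come from standard sphere-packing volume bounds at angle $\theta$ of order $\sqrt{\eps/L}$, giving at least $(c/\theta)^{d}$ caps, together with ensuring the count exceeds $1$.

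The main obstacle is the geometric step. We must ensure that modifying $f_0$ near direction $\bu_i$ does not change $f_0$ near any query outside that region, so that locality holds, while keeping $f_i$ $L$-Lipschitz. We must also verify that every query point $\x_t$, whose $\delta$-ball may partially intersect regions where $f_i\ne f_0$, still has $\dist(\bzero,\partial_\delta f_i(\x_t))>\eps$. I would first try taking $h_i$ to be a cone-shaped (conic) max-of-linear function with gradients in a narrow cone around $\bu_i$ relative to $-\be_1$. I would then show that any $\delta$-ball disjoint from the ``tip'' sees only gradients lying in a half-space $\{\g:\inner{\g,\bv}\ge\eps\}$ for some unit $\bv$, which certifies non-stationarity.
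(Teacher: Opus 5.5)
Your argument fails at the first step, the choice of resisting oracle. Once truncated, $f_0(\x)=\max\{-L'\inner{\be_1,\x},-\Delta\}$ is \emph{not} free of stationary points: every $\x$ with $\inner{\be_1,\x}>\Delta/L'$ has $\partial f_0(\x)=\{\bzero\}$. Consider the deterministic algorithm that queries $\bzero$ and then $\x_2=(\Delta/L'+1)\be_1$. It lands in that region, and your $f$ must coincide with $f_0$ near every query, so $\bzero\in\partial f(\x_2)\subseteq\partial_\delta f(\x_2)$ for every member of your family. Hence no lower bound beyond $T=2$ can follow. Without truncation, the same query gives $f(\x_2)=-\Delta-L'<f(\bzero)-\Delta$, so $f\notin\Fcal^d(L,\Delta)$.

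The planted-region counting does not rescue this. Each $f_i=\max\{f_0,h_i\}$ equals $f_0$ off the thin active set of $h_i$, so all $f_i$ share the stationary points of $f_0$, and the ``disjoint stationary regions'' do not exist. More fundamentally, every function in the class is bounded below and so has Goldstein-stationary points. An adversary answering according to a fixed decreasing $f_0$ reveals descent through function values and lets the algorithm walk to them. Your packing scale $\theta\approx\sqrt{\eps/L}$ is also reverse-engineered from the target bound. Your own remark that the relevant angle is $O(\eps/L)$ would give a different exponent.

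The missing idea is an oracle that never reveals progress. The paper answers $f_t(\x)=\inner{L\be_1,\x-\x_t}$ at every query, i.e.\ value $0$ and slope $L\be_1$ regardless of where $\x_t$ is. No single linear function produces these answers. The work is to show they can still be glued into one $1$-Lipschitz $F$ that equals $\inner{\be_1,\x-\x_t}$ near each $\x_t$ and has all Clarke subgradients in $\Kcal_\bw=\{\g\in\BB^d:\inner{\g,\bw}\geq 2\eps/L\}$. This is a convex set containing $\be_1$, at distance at least $2\eps/L$ from $\bzero$.

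Gluing (as an inf-convolution of the local pieces with the support function $\sigma_{\Kcal_\bw}$) works once $\sigma_{\Kcal_\bw}(\x_i-\x_j)>0$ for all $i\neq j$. This holds if $|\inner{\bw,(\x_i-\x_j)/\|\x_i-\x_j\|}|<\sqrt{1-(2\eps/L)^2}$. Such a $\bw$ with $\inner{\bw,\be_1}\geq 2\eps/L$ exists by a union bound over the fewer than $T^2/2$ pairwise directions, each of which excludes two spherical caps of radius $O(\eps/L)$. Comparing $T^2$ against cap measure of order $(O(\eps/L))^{d}$ is what produces $T\le(L/32\eps)^{d/2}$; a packing argument does not.

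Finally, $f=\max\{LF,-\Delta\}$. Since $F(\x_t)=0$ and $F$ is $1$-Lipschitz, the truncated region lies at distance at least $\Delta/L>\delta$ from every query. Your half-space certificate $\{\g:\inner{\g,\bv}\geq\eps\}$ is the right kind of witness. But it must hold for one direction near all queries simultaneously, and only the value-$0$ oracle together with this choice of $\bw$ makes that possible.
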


An immediate corollary of Theorem~\ref{thm: main} is that there is no deterministic first-order algorithm for nonsmooth nonconvex Lipschitz  optimization that returns a $(\delta,\eps)$-Goldstein stationary point using $\poly(d,\delta^{-1},\eps^{-1})$ oracle
calls, which answers an open question raised by \citet{jordan2023deterministic}.

We further note that the function $f\in\Fcal^d(L,\Delta)$ constructed in the proof of Theorem~\ref{thm: main} is not pathological (cf. \citealp{daniilidis2020pathological}), and it can be assumed to satisfy additional structure beyond Lipschitzness. In particular, in Section~\ref{sec: regular} we discuss how the result remains true even for continuously differentiable $f$, which implies various common regularity assumptions in nonsmooth optimization.

Before moving on to the formal proof in the next section, we will sketch the proof idea, and how it differs from the previous $\Omega(d)$ lower bounds.
We focus on $L=\Delta=1$ for simplicity, and we start the discussion by recalling the main idea behind the prior proofs of \cite{jordan2023deterministic,tian2024no}: If at each time step $t\in[T]$ a deterministic algorithm queries $\bx_{t}$, and learns from the local oracle that the function $f$ locally equals $f_t(\bx)= \inner{\be_1,\bx-\bx_t}$, this fixes the algorithm's iterates $\Xcal:=\{\bx_1,\dots,\bx_T\}$. Hence, it suffices to construct for any set $\Xcal\subset\reals^d$ of size $T$, a function which around each point $\bx_t\in\Xcal$ locally matches $f_t$, without any point in $\Xcal$ being Goldstein-stationary. To do so, these prior works fix some unit direction $\bw\perp \Xcal$ orthogonal to all iterates, and construct a function which globally equals $f_\bw(\bx):=\max\{\inner{\bw,\bx},-1\}$, with local interpolations around each point in $\Xcal$ to match all the $f_t$'s. It is easy to see that all Goldstein-stationary points of $f_\bw$ are anti-correlated with $\bw$, ruling out all points in $\Xcal$.
A direction $\bw\perp \Xcal$ clearly exists whenever $|\Xcal|=T<d$, leading to the $\Omega(d)$ lower bound.
It is also clear that this strategy breaks as soon as $T=d$ since no direction $\bw\in\SS^{d-1}$ orthogonal to all iterates is guaranteed to exist, hence failing to prove any lower bound larger than $d$.

The key observation behind our exponential improvement is that exact orthogonality is unnecessarily strong, and that it is possible to construct a function $f$ with the desired properties based on a direction $\bw\in\SS^{d-1}$ which is merely not too aligned with $\Xcal$. Specifically, after translating so that $\bx_1=\bzero$, consider the set $\Vcal:=\{(\bx_i-\bx_j)/\|\bx_i-\bx_j\|:i\neq j\in[T]\}$ consisting of all normalized points in $\Xcal$ as well as their normalized differences.
For any $\bv\in\Vcal$, the subset of points in $\SS^{d-1}$ which are $\eps$-close to $\bv$ is a spherical cap, whose spherical measure decays with $d$ as $\eps^{d-1}$. Since $|\Vcal|=O(T^2)$, a union bound shows that as long as $T\lesssim (1/\eps)^{\Omega(d)}$, there is some $\bw\in\SS^{d-1}$ which is not too correlated, or anti-correlated, with any $\bv\in\Vcal$, even after conditioning on $\inner{\be_1,\bw}\geq2\eps$; see Figure~\ref{fig: f} (left) for an illustration.
With such $\bw$ in hand, we replace the previous half-space construction $\bx\mapsto \max\{\inner{\bw,\bx},-1\}$ which resulted in all gradients outside the minimizer set being exactly $\bw$, by a more flexible construction, restricting gradients to the cap $\Kcal_\bw=\{\bg\in\BB^d:\inner{\bg,\bw}\geq2\eps\}$. Since $\be_1\in\Kcal_\bw$, this restriction is compatible with the local oracle responses near every $\bx_t\in\Xcal$, and moreover, $\Kcal_\bw$ is convex and bounded away from the origin, so any point whose nearby gradients remain in $\Kcal_\bw$ cannot be Goldstein-stationary. The fact that the differences in $\Vcal$ are not correlated with $\bw$ ensures that 
we can ``pinch'' a small affine piece with slope $\be_1$ around each iterate and extend it downward along the $-\bw$ direction without any two such pieces colliding;
see Figure~\ref{fig: f} (right) for an illustration. The formal construction in our proof
implements this gluing idea through an extension of the convex-analytic support function associated to $\Kcal_\bw$, which preserves the prescribed local behavior around all points in $\Xcal$ while keeping every Goldstein subgradient at the query points away from $\bzero$.

\begin{figure}[t]
    \centering

    \begin{subfigure}{0.48\textwidth}
        \centering
       \includegraphics[width=0.9\linewidth, trim=1cm 0 12cm 0, clip]{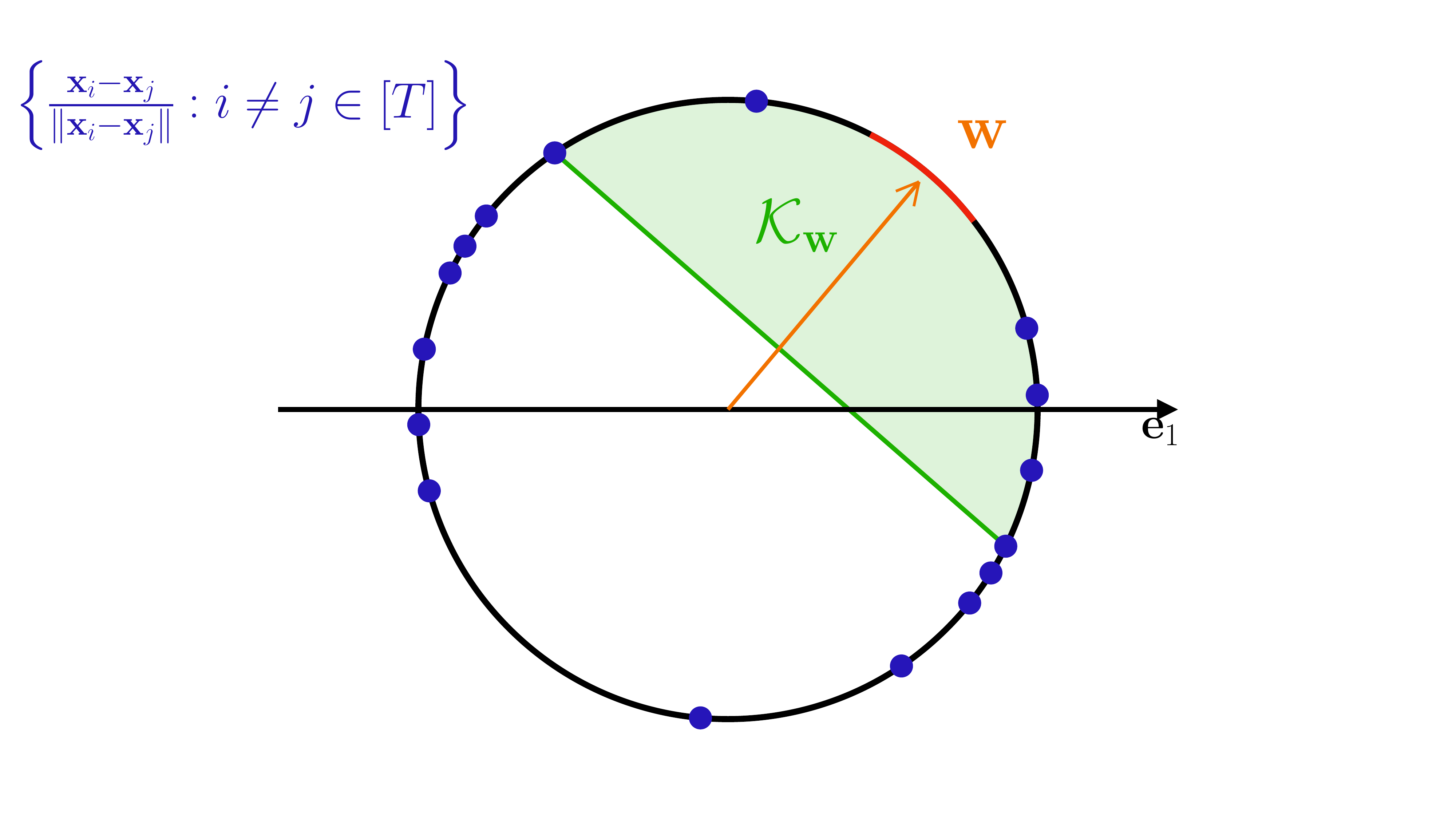}
        \caption{The key quantities playing a role in the construction: the set of normalized differences, a sufficiently separated direction $\bw\in\SS^{d-1}$, and its associated cap $\Kcal_\bw\subset\BB^{d}$ of
        allowed gradients.}
        \label{fig:first}
    \end{subfigure}
    \hfill
    \begin{subfigure}{0.48\textwidth}
        \centering
        \includegraphics[width=0.75\linewidth]{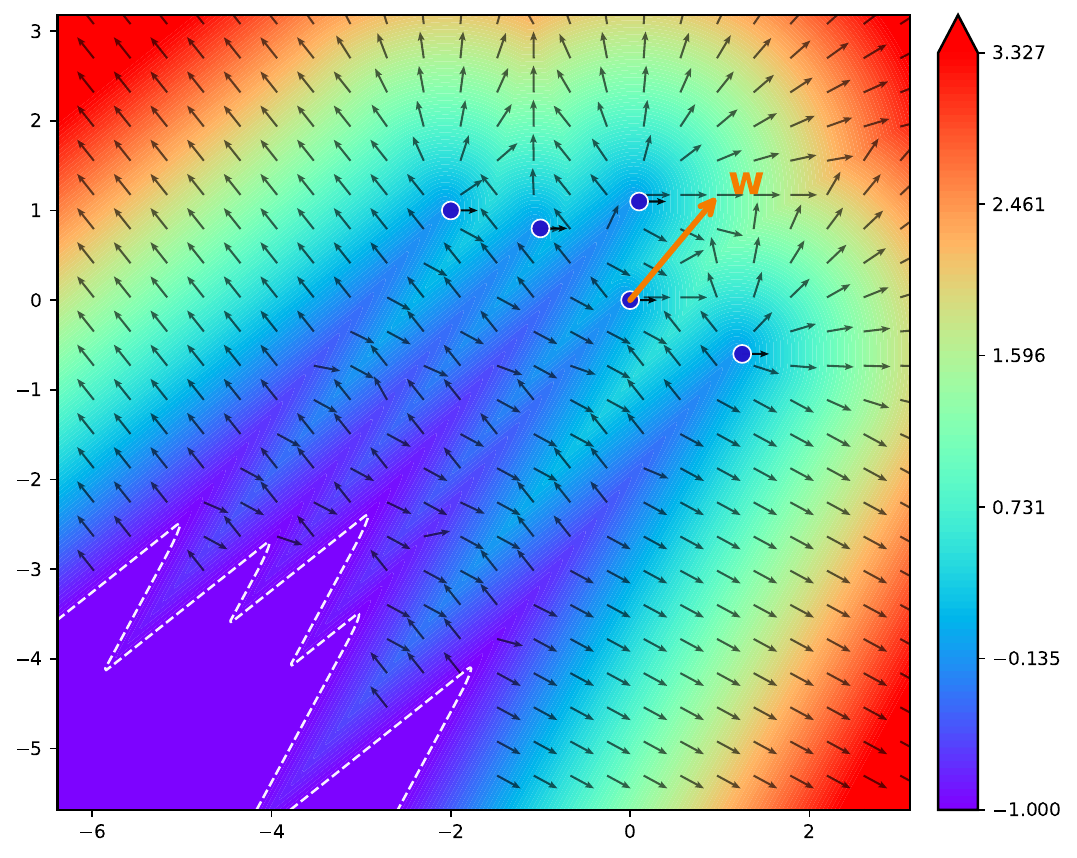}
        \caption{Contour plot of $f:\reals^2\to\reals$ associated to iterates (blue dots); arrows represent the gradient field, and the dashed white line is the boundary of $\arg\min f$, connected to each iterate in the direction of $\bw$.}
        \label{fig:second}
    \end{subfigure}

    \caption{Illustration of the function $f$ constructed in the proof of Theorem~\ref{thm: main}.}
    \label{fig: f}
\end{figure}

\section{Proof of Theorem~\ref{thm: main}} \label{sec: proof}

We follow a resisting oracle strategy: Suppose that at any time $t\in[T-1]$, a deterministic local algorithm picks $\bx_t\in\reals^d$, and the local oracle's response is $\Ocal_{f_t}(\bx_t)$ where $f_t(\bx):=\inner{L\be_1,\bx-\bx_t}$. Namely, the oracle responds that the function is locally equal to $\bx\mapsto\inner{L\be_1,\bx-\bx_t}$. 
By induction on $t$, since the algorithm is deterministic, this fixes all iterates $\bx_1,\dots,\bx_T$. Our goal is to construct a single function $f\in\Fcal^d(L,\Delta)$ matching all of these oracle responses, such that no $\bx_t$ is a $(\delta,\eps)$-Goldstein stationary point. We assume without loss of generality that $\bx_1=\bzero$, that all $T$ iterates are distinct, and denote $\Xcal:=\{\bx_1,\dots,\bx_T\}\subset\reals^d$. We also denote $\eps'=2\eps/L$.

The following lemma ensures that as long as $T$ isn't exponentially large with respect to $d$, roughly up to $T^2\lesssim \eps^{-d}$, it is always possible to find a direction $\bw\in\SS^{d-1}$ whose correlation to normalized differences between points in $\Xcal$ is bounded away from $1$.

\begin{lemma}\label{lem: sphere cap}
If $T\leq(16\eps')^{-d/2}$, then there exists a unit vector $\bw\in\SS^{d-1}$ such that $\inner{\bw,\be_1}\geq \eps'$ and for all $i\neq j\in[T]:$
\begin{equation}\label{eq: w cond}
\left|\binner{\bw,\frac{\bx_i-\bx_j}{\|\bx_i-\bx_j\|}}\right|<\sqrt{1-\eps'^2}~.
\end{equation}

\end{lemma}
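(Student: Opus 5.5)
The plan is a probabilistic (measure-counting) argument on the sphere. Draw $\bw$ uniformly from $\SS^{d-1}$ according to the normalized surface measure $\sigma$. I will show two things: the ``good'' cap $G:=\{\bw:\inner{\bw,\be_1}\geq\eps'\}$ has measure bounded below by an absolute constant, and the union of all ``bad'' sets violating \eqref{eq: w cond} has strictly smaller measure. Some $\bw\in G$ then avoids every bad set, which proves the lemma.

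\textbf{Step 1 (reduce each bad set to caps).} For a pair $i\neq j$, put $\bv_{ij}:=(\bx_i-\bx_j)/\|\bx_i-\bx_j\|$. The pair is violated exactly when $|\inner{\bw,\bv_{ij}}|\geq\sqrt{1-\eps'^2}$. This is the union of two spherical caps of angular radius $\theta=\arcsin\eps'$, centered at $\pm\bv_{ij}$. The pairs $(i,j)$ and $(j,i)$ give the same set, so at most $T(T-1)/2$ distinct pairs appear. Hence there are at most $T^2$ caps, each of angular radius $\theta$ with $\sin\theta=\eps'$.

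\textbf{Step 2 (cap measure bounds).} I will use the standard estimate that a cap $C_\theta$ of angular radius $\theta\leq\pi/2$ satisfies $\sigma(C_\theta)\leq(\sin\theta)^{d-1}$, possibly up to a factor such as $\tfrac12$. One way to prove it: write $\sigma(C_\theta)$ as the ratio of the volume of the cone over the cap to the volume of $\BB^d$. That cone is contained in a ball of radius $\sin\theta$ around the point $\cos\theta\,\bv$ when $\theta\le\pi/4$, which gives the bound $(\sin\theta)^d$ up to a constant. Alternatively one can use the one-dimensional marginal density of $\inner{\bw,\bv}$, which is proportional to $(1-s^2)^{(d-3)/2}$.

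For the good cap, $\eps<L/32$ gives $\eps'<1/16$. By symmetry, $\sigma(G)=\tfrac12-\Pr[0\leq\inner{\bw,\be_1}<\eps']$. To keep this above an absolute constant such as $1/4$ without paying a $\sqrt d$ factor, I will instead restrict attention to a smaller subcap, namely $\{\inner{\bw,\be_1}\geq 1/2\}\subseteq G$. I will bound its measure from below, and choose whichever bound the exponent $16^{-d}$ can absorb.

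\textbf{Step 3 (union bound).} Using $T\leq(16\eps')^{-d/2}$, the bad measure is at most
\[
T^2\eps'^{\,d-1}\leq(16\eps')^{-d}\eps'^{\,d-1}=16^{-d}\eps'^{-1}.
\]
This must be strictly less than $\sigma(G)$.

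I expect this comparison to be the main obstacle, because of the stray factor $1/\eps'$, which the lemma's hypotheses do not obviously control when $d$ is small. I would first try to sharpen the cap bound to the volume-ratio form $\sigma(C_\theta)\lesssim(\sin\theta)^{d}\cdot\poly(d)$ or $(2\sin\theta)^{d}/\text{const}$, so that the bad measure becomes $16^{-d}\cdot 2^{d}\cdot O(1)=O(8^{-d})$, which beats a constant lower bound on $\sigma(G)$. Failing that, I would import the dimension assumption $d\geq 2\log_2(1/(4\eps'))$ from Theorem~\ref{thm: main}, under which $16^{-d}/\eps'\leq 4^{-d}\cdot 4^{-d}\cdot 4^{d/2}\cdot\tfrac14<1/4$ (using $\eps'^{-1}\le 4\cdot2^{d/2}$). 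Either route makes the bad measure strictly smaller than $\sigma(G)$, so a $\bw$ with the required properties exists.
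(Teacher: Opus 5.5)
You have a genuine gap, although it is repairable. Your union-bound skeleton is sound. The bad set for each pair is two caps of angular radius $\theta=\arcsin\eps'$, and the estimate $\sigma(C_\theta)\le\tfrac12\sin^{d-1}\theta$ for $\theta\le\pi/2$ is correct if you prove it via the marginal density. The cone-in-ball containment you quote holds only for $\theta\ge\pi/4$: for smaller $\theta$ the origin lies in the cone but is at distance $\cos\theta>\sin\theta$ from $\cos\theta\,\bv$. Your second route, importing $d\ge 2\log_2(1/(4\eps'))$ from Theorem~\ref{thm: main}, is right; the paper's proof also relies on this assumption in its final step, in the form $d>\log_2(1/(4\eps'))$.

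The gap is on the other side of the comparison: you never obtain a valid lower bound on $\sigma(G)$. It is not bounded below by an absolute constant. By standard concentration, $\sigma(G)\le e^{-d\eps'^2/2}$, which tends to $0$ for fixed $\eps'$ as $d$ grows, and the theorem allows arbitrarily large $d$. Restricting to the subcap $\{\inner{\bw,\be_1}\ge 1/2\}$ cannot restore a constant, since a subset has smaller measure; its measure is roughly $(\sqrt3/2)^{d}$. So your concluding inequality ``bad measure $<1/4$'' does not show that the bad set fails to cover $G$. Your remark about letting $16^{-d}$ absorb the subcap's measure is the right instinct, and carried out it does close the argument:
\[
\sigma(G)\ \ge\ \frac{\int_0^{\pi/3}\sin^{d-2}s\,\cos s\,ds}{\int_0^{\pi}\sin^{d-2}s\,ds}\ \ge\ \frac{(\sqrt3/2)^{d-1}}{\pi(d-1)}~,
\qquad
\tfrac12T^2\eps'^{\,d-1}\ \le\ \tfrac12\,16^{-d}\eps'^{-1}\ \le\ 2^{1-7d/2}~.
\]
The last step uses $\eps'^{-1}\le 4\cdot 2^{d/2}$; your displayed arithmetic there is off. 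Then $2^{1-7d/2}<(\sqrt3/2)^{d-1}/(\pi(d-1))$ for every $d\ge 5$, and $d\ge5$ is forced because $\eps'<1/16$.

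Your first route cannot succeed. For fixed $d$, the measure of a cap of angular radius $\theta$ is of exact order $\theta^{d-1}$ as $\theta\to0$, so no bound of the form $(\sin\theta)^d\poly(d)$ or $(2\sin\theta)^d/\mathrm{const}$ holds. The stray $1/\eps'$ is genuine, and a dimension lower bound is really needed.

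The paper avoids lower-bounding the admissible region altogether by slicing. It writes $\bw=\eps'\be_1+\sqrt{1-\eps'^2}\,\bz$ with $\bz\in\SS^{d-2}\subset\be_1^\perp$, so $\inner{\bw,\be_1}=\eps'$ holds automatically, and it then suffices that the bad $\bz$'s have $\mu_{d-2}$-measure below $1$. The price is a short computation, via $(1-\alpha\gamma_v)^2-(1-\alpha^2)(1-\gamma_v^2)=(\gamma_v-\alpha)^2$, showing each bad set pulls back into two caps of Euclidean radius $<2\eps'$ in $\SS^{d-2}$. Your repaired full-sphere version uses only off-the-shelf cap estimates, but it must balance two exponentially small measures against each other.
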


\begin{proof}[Proof of Lemma~\ref{lem: sphere cap}]

With slight abuse of notation, we let $\SS^{d-2}\subset \be_1^{\perp}$ denote the unit sphere of the subspace orthogonal to $\be_1$.
For any $ \bv\in\SS^{d-1}\setminus\{\pm\be_1\}$, let
\[
A_\bv:=\left\{\bz\in\SS^{d-2}\subset\be_1^\perp~:~\left|\binner{\eps'\be_1+\sqrt{1-\eps'^2}\bz,\bv}\right|\geq\sqrt{1-\eps'^2}\right\}~.
\]
We start by bounding the spherical surface measure of $A_\bv$. To that end, decompose
\[
\bv=\gamma_v\be_1+\sqrt{1-\gamma_v^2}\bu_v
\]
where $\gamma_v\in(-1,1),~\bu_v\in\SS^{d-2}\subset\be_1^\perp$, and let $\alpha:=\frac{\eps'}{\sqrt{1-\eps'^2}},~\beta:=\sqrt{1-\gamma_v^2}$. Note that
\[
\binner{\eps'\be_1+\sqrt{1-\eps'^2}\bz,\bv}=\eps'\gamma_v+\sqrt{1-\eps'^2}\beta\inner{\bz,\bu_v}~,
\]
and therefore
\[
\binner{\eps'\be_1+\sqrt{1-\eps'^2}\bz,\bv}\geq\sqrt{1-\eps'^2}
~~\implies~~
\inner{\bz,\bu_v}\geq
\frac{\sqrt{1-\eps'^2}-\eps'\gamma_v}{\beta\sqrt{1-\eps'^2}}
=\frac{1-\alpha\gamma_v}{\sqrt{1-\gamma_v^2}}
\geq \sqrt{1-\alpha^2}~,
\]
where the last inequality holds since
$(1-\alpha\gamma_v)^2-(1-\alpha^2)(1-\gamma_v^2)=(\gamma_v-\alpha)^2\geq 0$. 
A similar calculation shows that $\inner{\eps'\be_1+\sqrt{1-\eps'^2}\bz,\bv}\leq-\sqrt{1-\eps'^2}$ implies $-\inner{\bz,\bu_v}\geq \sqrt{1-\alpha^2}$. 
Hence, we get that
\begin{equation} \label{eq: caps}
A_\bv\subset 
\left\{\bz\in\SS^{d-2}\subset\be_1^\perp~:~\binner{\bz,\bu_v}\geq\sqrt{1-\alpha^2}\right\}
\cup
\left\{\bz\in\SS^{d-2}\subset\be_1^\perp~:~\binner{\bz,-\bu_v}\geq\sqrt{1-\alpha^2}\right\}
~.
\end{equation}

Next, we claim that each of the two spherical caps in \eqref{eq: caps} has Euclidean radius at most $2\eps'$. Indeed, if $\inner{\bz,\bu_v}\geq\sqrt{1-\alpha^2}$ then
\[
\|\bz-\bu_v\|^2=2(1-\inner{\bz,\bu_v})\leq 2(1-\sqrt{1-\alpha^2})\leq 2\alpha^2=\frac{2\eps'^2}{1-\eps'^2}<4\eps'^2~,
\]
and the second cap follows by symmetry.
Thus, by a standard bound on the surface measure of a spherical cap (Lemma~\ref{lem: cap measure}), we get that
\[
\mu_{d-2}(A_\bv)\leq 2(4\eps')^{d-2}~,
\]
where $\mu_{d-2}$ is the normalized $(d-2)$-dimensional surface measure on $\SS^{d-2}\subset \mathbf{e}_1^\perp$.

Union bounding over $\bv\in\left\{(\bx_i-\bx_j)/{\|\bx_i-\bx_j\|}:~1\leq i<j\leq T\right\}$ which is a set of size at most $\frac{T(T-1)}{2}<\frac{T^2}{2}$, we see that the set of $\bw\in\SS^{d-1}$ which do \emph{not} satisfy the conclusion of the lemma have normalized $(d-1)$-dimensional surface measure of at most
\[
T^2(4\eps')^{d-2} \leq
\left(\frac{1}{16\eps'}\right)^{d}(4\eps')^{d-2}
=\frac{1}{16\eps'^2 4^{d}}
\overset{[\,d>\log_2(1/4\eps')\,]}{<}1~.
\]
Thus, there must exist some $\bw\in\SS^{d-1}$ in the complementary set since it has positive measure, satisfying the required properties.
\end{proof}

From now on, fix $\bw\in\SS^{d-1}$ to be the unit vector given by Lemma~\ref{lem: sphere cap}, and define the convex set
\[
\Kcal_\bw:=\{\bg\in\BB^d~:~\inner{\bg,\bw}\geq \eps'\}\subset\BB^d~.
\]
Note that ${\Kcal_{\bw}}$ satisfies
\begin{equation} \label{eq: Kw dist}
\dist(\bzero,{\Kcal_{\bw}})
=\inf_{\bg\in {\Kcal_{\bw}}}\|\bg\|
\geq \inf_{\bg\in {\Kcal_{\bw}}}\inner{\bg,\bw}
\geq \eps'~.
\end{equation}
Let $\sigma_{\Kcal_{\bw}}:\reals^d\to\reals$ be its associated support function
\[
\sigma_{\Kcal_{\bw}}(\bv):=\sup_{\bg\in {\Kcal_{\bw}}}\inner{\bg,\bv}~.
\]

\begin{lemma} \label{lem: sigma>0}
    For all $i\neq j\in[T]:~\sigma_{\Kcal_{\bw}}(\bx_i-\bx_j)>0$.
\end{lemma}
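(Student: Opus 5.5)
It suffices to show that for every unit vector $\bv=(\bx_i-\bx_j)/\|\bx_i-\bx_j\|$ with $i\neq j$ we have $\sigma_{\Kcal_\bw}(\bv)>0$. This reduction is valid because $\sigma_{\Kcal_\bw}$ is positively homogeneous. The plan is to exhibit, for each such $\bv$, an explicit $\bg\in\Kcal_\bw$ with $\inner{\bg,\bv}>0$.

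Fix $\bv$ and write $c:=\inner{\bw,\bv}$. By \eqref{eq: w cond} from Lemma~\ref{lem: sphere cap}, $|c|<\sqrt{1-\eps'^2}$. If $\bv$ were parallel to $\bw$, then $|c|=1$, contradicting this bound. So the component $\bu:=(\bv-c\bw)/\|\bv-c\bw\|$ is a well-defined unit vector orthogonal to $\bw$, and $\|\bv-c\bw\|=\sqrt{1-c^2}$.

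Consider the candidate
\[
\bg:=\eps'\bw+\sqrt{1-\eps'^2}\,\bu .
\]
It satisfies $\|\bg\|=1$ and $\inner{\bg,\bw}=\eps'$, so $\bg\in\Kcal_\bw$. Its inner product with $\bv$ is
\[
\inner{\bg,\bv}=\eps' c+\sqrt{1-\eps'^2}\sqrt{1-c^2}.
\]
If $c\geq 0$, this is clearly positive: the second term is positive because $|c|<1$.

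If $c<0$, positivity is equivalent to $\sqrt{1-\eps'^2}\sqrt{1-c^2}>\eps'|c|$. Squaring both sides, this becomes $(1-\eps'^2)(1-c^2)>\eps'^2c^2$, i.e., $1-\eps'^2>c^2$. This is exactly the condition $|c|<\sqrt{1-\eps'^2}$ guaranteed by \eqref{eq: w cond}.

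Hence $\sigma_{\Kcal_\bw}(\bv)\geq\inner{\bg,\bv}>0$, and rescaling by $\|\bx_i-\bx_j\|>0$ gives the claim. The only delicate point is the case $c<0$, where $\bv$ is anti-correlated with $\bw$; this is precisely where the strict inequality in Lemma~\ref{lem: sphere cap} is needed. Note also that the condition $\inner{\bw,\be_1}\geq\eps'$ plays no role in this argument.
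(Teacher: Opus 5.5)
Your proof is correct and follows essentially the same route as the paper: the same witness $\bg=\eps'\bw+\sqrt{1-\eps'^2}\,\bu$, where $\bu$ is the normalized component of $\bv$ orthogonal to $\bw$, and the same use of \eqref{eq: w cond} to obtain $\sqrt{1-\eps'^2}\sqrt{1-c^2}>\eps'|c|$. The differences are cosmetic: the paper works with the unnormalized $\bv$, and it avoids your sign case split by bounding with $|\inner{\bv,\bw}|$ directly.
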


\begin{proof}[Proof of Lemma~\ref{lem: sigma>0}]
Fix $i\neq j\in[T]$, and denote $\bv:=\bx_i-\bx_j\neq \bzero$. Decompose $\bv= \inner{\bv,\bw}\bw+\bu$ where $\bu\perp \bw$.
By \eqref{eq: w cond} it holds that $|\inner{\bv,\bw}|^2<(1-\eps'^2)\|\bv\|^2
=(1-\eps'^2)(|\inner{\bv,\bw}|^2+\|\bu\|^2)$, thus
\begin{equation} \label{eq: u>0}
(1-\eps'^2)\|\bu\|^2>\eps'^2|\inner{\bv,\bw}|^2
\geq 0~,
\end{equation}
and in particular $\bu\neq \bzero$.
Let $\bg:=\eps'\bw+\frac{\sqrt{1-\eps'^2}}{\|\bu\|}\bu$ which is a unit vector that satisfies $\inner{\bg,\bw}=\eps'$, hence $\bg\in {\Kcal_{\bw}}$. Therefore, it holds that
\begin{align*}
\sigma_{\Kcal_{\bw}}(\bv)\geq
\inner{\bg,\bv}
=\binner{\eps'\bw+\tfrac{\sqrt{1-\eps'^2}}{\|\bu\|}\bu,\inner{\bv,\bw}\bw+\bu}
=\eps'\inner{\bv,\bw}+\sqrt{1-\eps'^2}\|\bu\|
>0~,
\end{align*}
the last inequality holding due to \eqref{eq: u>0}.
\end{proof}

Following the lemma, denote
\begin{align*}
D:={\min}_{i\neq j\in[T]}~\sigma_{\Kcal_{\bw}}(\bx_i-\bx_j)>0~,
~~~~~\rho:={\min}_{i\neq j\in[T]}~\|\bx_i-\bx_j\|>0~,
~~~~~r:=\frac{1}{8}\min\{\rho,\,D,\,1,\,\Delta/L\}>0~,
\end{align*}
and let
\[
\Bcal:=\bigcup_{t\in[T]}\BB^d(\bx_t,r)~.
\]
Denote by $\Pi_\Xcal:\Bcal\to\Xcal$ the projection onto $\Xcal$. Note that the projection is well-defined since $\Bcal$ is a disjoint union of balls around points in $\Xcal$ of the same radius, so it simply maps each point to its corresponding ball's center. We define the function
\[
F(\bx):=\inf_{\bz\in \Bcal}\left\{
\binner{\be_1,\bz-\Pi_{\Xcal}(\bz)}
+\sigma_{\Kcal_{\bw}}(\bx-\bz)\right\}
~.
\]

$F$ defined above is essentially our function construction, which we will later scale by $L$ and truncate at $-\Delta$ to ensure the required Lipschitz parameter and initial suboptimality gap. The next lemma establishes its useful properties, namely that it is Lipschitz, has a subdifferential restricted to $\Kcal_\bw$, and is consistent with the oracle responses given by the resisting strategy (up to scaling).

\begin{lemma} \label{lem: F}
The following hold:
\begin{enumerate}[label=(\roman*)]
    \item $F$ is $1$-Lipschitz.
    \item $\partial F(\bx)\subseteq {\Kcal_{\bw}}$ for all $\bx\in\reals^d$. Hence, for any $\delta>0:~\partial_\delta F(\bx)\subseteq {\Kcal_{\bw}}$.
\item For any~ $\bx\in\Bcal:~F(\bx)=\inner{\be_1,\bx-\Pi_\Xcal(\bx)}$. Thus, for $\bx\in\BB^d(\bx_t,r):~F(\bx)=\inner{\be_1,\bx-\bx_t}$.
\end{enumerate}

\end{lemma}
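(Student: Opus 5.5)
The plan is to view $F$ as an infimal convolution $F=h\,\square\,\sigma_{\Kcal_\bw}$, where $h(\bz):=\inner{\be_1,\bz-\Pi_\Xcal(\bz)}$ on $\Bcal$ and $h=+\infty$ off $\Bcal$. All three items then follow from properties of the support function. Since $\Kcal_\bw\subset\BB^d$ is nonempty (it contains $\be_1$, because $\inner{\be_1,\bw}\ge\eps'$), $\sigma_{\Kcal_\bw}$ is convex, finite, positively homogeneous, subadditive, and satisfies $\sigma_{\Kcal_\bw}(\bv)\le\|\bv\|$. It is therefore $1$-Lipschitz, and its subdifferential at any point lies in $\Kcal_\bw$ (because $\Kcal_\bw$ is closed and convex). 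We also note that $\sigma_{\Kcal_\bw}(\bv)\ge\inner{\be_1,\bv}$ for all $\bv$.

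For (i), we first check that $F$ is finite. The infimum is over a compact set of a continuous function, so $F$ is real valued. Each map $\bx\mapsto h(\bz)+\sigma_{\Kcal_\bw}(\bx-\bz)$ is $1$-Lipschitz, and an infimum of $1$-Lipschitz functions is $1$-Lipschitz, which gives (i).

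For (ii), subadditivity gives $F(\bx+\bv)\le F(\bx)+\sigma_{\Kcal_\bw}(\bv)$ for all $\bx,\bv$. At a differentiability point $\bx$, take $\bv=s\bu$, divide by $s$ and let $s\downarrow0$. This yields $\inner{\nabla F(\bx),\bu}\le\sigma_{\Kcal_\bw}(\bu)$ for all $\bu$. Since $\Kcal_\bw$ is closed and convex, the bipolar/separation theorem gives $\nabla F(\bx)\in\Kcal_\bw$. Limits of gradients stay in the closed set $\Kcal_\bw$, and convex hulls stay in the convex set $\Kcal_\bw$, so $\partial F(\bx)\subseteq\Kcal_\bw$. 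Taking the convex hull of the union over the $\delta$-ball gives $\partial_\delta F(\bx)\subseteq\Kcal_\bw$.

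For (iii), fix $\bx\in\BB^d(\bx_t,r)$. Choosing $\bz=\bx$ gives $F(\bx)\le\inner{\be_1,\bx-\bx_t}$, so it suffices to show $h(\bz)+\sigma_{\Kcal_\bw}(\bx-\bz)\ge\inner{\be_1,\bx-\bx_t}$ for every $\bz\in\Bcal$.

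If $\bz\in\BB^d(\bx_t,r)$, this follows from $\sigma_{\Kcal_\bw}(\bx-\bz)\ge\inner{\be_1,\bx-\bz}$.

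If $\bz\in\BB^d(\bx_s,r)$ with $s\ne t$, this is the main obstacle, and it is where $D$ and the choice of $r$ enter. Write
\[
\bx-\bz=(\bx_t-\bx_s)+(\bx-\bx_t)-(\bz-\bx_s).
\]
Subadditivity, together with $\sigma_{\Kcal_\bw}(\bv)\ge-\|\bv\|$ (since $\Kcal_\bw\subset\BB^d$), gives
\[
\sigma_{\Kcal_\bw}(\bx-\bz)\ge\sigma_{\Kcal_\bw}(\bx_t-\bx_s)-\|\bx-\bx_t\|-\|\bz-\bx_s\|\ge D-2r.
\]
We also have $h(\bz)\ge-r$ and $\inner{\be_1,\bx-\bx_t}\le r$. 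We therefore need $D-3r\ge r$, i.e. $D\ge4r$, which holds since $r\le D/8$. The second part of (iii) follows because $\Pi_\Xcal(\bx)=\bx_t$ on that ball; disjointness of the balls uses $r\le\rho/8$.
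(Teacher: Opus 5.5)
Your proposal is correct and follows essentially the same route as the paper. Item (i) uses the same infimum-of-$1$-Lipschitz-functions argument. Item (ii) uses the same subadditivity-plus-separation argument. Item (iii) uses the same case split on whether $\bz$ lies in the same ball as $\bx$ or a different one, with the $D-2r$ bound in the second case.

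One small wording fix concerns the lower bound $\sigma_{\Kcal_\bw}(\bx-\bz)\ge\sigma_{\Kcal_\bw}(\bx_t-\bx_s)-\|\bx-\bx_t\|-\|\bz-\bx_s\|$. It follows from subadditivity applied to $\sigma_{\Kcal_\bw}(\bx_t-\bx_s)$ together with $\sigma_{\Kcal_\bw}(\bv)\le\|\bv\|$, which is just the $1$-Lipschitz property you already noted. It does not follow from $\sigma_{\Kcal_\bw}(\bv)\ge-\|\bv\|$.
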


\begin{proof}[Proof of Lemma~\ref{lem: F}]

For the first item, fix $\bx,\by\in\reals^d$. 
Since $\Kcal_\bw\subset\BB^d$, its support function $\sigma_{\Kcal_\bw}$ is $1$-Lipschitz by Cauchy-Schwarz, thus for every $\bz\in\Bcal$ it holds that
\[
\binner{\be_1,\bz-\Pi_{\Xcal}(\bz)}
+\sigma_{\Kcal_{\bw}}(\bx-\bz)
\leq \binner{\be_1,\bz-\Pi_{\Xcal}(\bz)}+\sigma_{\Kcal_{\bw}}(\by-\bz)+\|\bx-\by\|~.
\]
Taking the infimum over $\bz\in\Bcal$ yields
\[
F(\bx)\leq F(\by)+\|\bx-\by\|~.
\]
By symmetry, $F(\by)\leq F(\bx)+\|\bx-\by\|$ and so $|F(\by)- F(\bx)|\leq \|\bx-\by\|$, namely $F$ is $1$-Lipschitz. Note that in particular, $\partial F$ is well-defined.

We turn to proving the second item. Note that the support function $\sigma_{\Kcal_{\bw}}$ is sub-additive since the sup operation is. Hence, for any $\bx,\bv\in\reals^d$ and $h>0$ it holds that
\begin{align*}
F(\bx+h\bv)&=\inf_{\bz\in \Bcal}\left\{
\binner{\be_1,\bz-\Pi_{\Xcal}(\bz)}
+\sigma_{\Kcal_{\bw}}(\bx+h\bv-\bz)\right\}
\\&\leq\inf_{\bz\in \Bcal}\left\{
\binner{\be_1,\bz-\Pi_{\Xcal}(\bz)}
+\sigma_{\Kcal_{\bw}}(\bx-\bz)+\sigma_{\Kcal_{\bw}}(h\bv)\right\}
\\&=F(\bx)+\sigma_{\Kcal_{\bw}}(h\bv)
\\&=F(\bx)+h\cdot\sigma_{\Kcal_{\bw}}(\bv)~.
\end{align*}
Hence, for any $\bx$ at which $F$ is differentiable, and any $\bv\in\reals^d:$
\[
\inner{\nabla F(\bx),\bv}
=\lim_{h\to 0^+}\frac{F(\bx+h\bv)-F(\bx)}{h}
\leq\sigma_{\Kcal_{\bw}}(\bv)~.
\]
If it were the case that $\nabla F(\bx)\notin {\Kcal_{\bw}}$, then since ${\Kcal_{\bw}}$ is compact and convex, there would have existed a separating hyperplane $\bv'\in\reals^d$ such that $\inner{\nabla F(\bx),\bv'}>\sup_{\bg\in {\Kcal_{\bw}}}\inner{\bg,\bv'}=\sigma_{\Kcal_{\bw}}(\bv')$, in contradiction. 
Thus, at all differentiable points $\nabla F(\bx)\in {\Kcal_{\bw}}$, and since ${\Kcal_{\bw}}$ is closed and convex, we conclude by definition of the Clarke subdifferential that $\partial F\subseteq {\Kcal_{\bw}}$ at all points.
Once again since $\Kcal_\bw$ is a convex set, we further get $\partial_\delta F\subseteq {\Kcal_{\bw}}$, proving the second item.

We move on to prove the third item. Let $F_\Bcal:\Bcal\to\reals$
be $F_\Bcal(\bx):=\binner{\be_1,\bx-\Pi_{\Xcal}(\bx)}$, and note that
$|F_\Bcal|\leq r$ by Cauchy-Schwarz.
We start by showing that for any $\bx,\bz\in\Bcal:$
\begin{equation} \label{eq: F<sigma}
F_{\Bcal}(\bx)-F_{\Bcal}(\bz)\leq\sigma_{\Kcal_{\bw}}(\bx-\bz)~.
\end{equation}
Indeed, if $\Pi_\Xcal(\bx)=\Pi_\Xcal(\bz)$ then $F_{\Bcal}(\bx)-F_{\Bcal}(\bz)=\inner{\be_1,\bx-\bz}\leq\sigma_{\Kcal_{\bw}}(\bx-\bz)$, where the inequality holds since $\be_1\in {\Kcal_{\bw}}$. Otherwise, if $\Pi_\Xcal(\bx)\neq\Pi_\Xcal(\bz)$, then since $\sigma_{\Kcal_{\bw}}$ is $1$-Lipschitz as ${\Kcal_{\bw}}\subset\BB^d$, it holds that
\begin{align*}
\sigma_{\Kcal_{\bw}}(\bx-\bz)
&\geq\sigma_{\Kcal_{\bw}}(\Pi_\Xcal(\bx)-\Pi_\Xcal(\bz))-\|(\bx-\bz)-(\Pi_\Xcal(\bx)-\Pi_\Xcal(\bz))\|
\\&\geq D-\|\bx-\Pi_\Xcal(\bx)\|-\|\bz-\Pi_\Xcal(\bz)\|
\\&\geq D-2r> 2r
\geq F_{\Bcal}(\bx)-F_{\Bcal}(\bz)~.
\end{align*}

Having established \eqref{eq: F<sigma}, we now prove the lemma by showing that $F(\bx)=F_\Bcal(\bx)$ for any $\bx\in\Bcal$. On one hand,
\[
F(\bx)=
\inf_{\bz\in \Bcal}\left\{
\binner{\be_1,\bz-\Pi_{\Xcal}(\bz)}
+\sigma_{\Kcal_{\bw}}(\bx-\bz)\right\}
\leq 
\binner{\be_1,\bx-\Pi_{\Xcal}(\bx)}
+\underset{=0}{\underbrace{\sigma_{\Kcal_{\bw}}(\bzero)}}
=F_\Bcal(\bx)~.
\]
On the other hand, by \eqref{eq: F<sigma}, for any $\bz\in\Bcal~:F_\Bcal(\bz)+\sigma_{\Kcal_{\bw}}(\bx-\bz)\geq F_\Bcal(\bx)$, and taking the infimum over $\bz$ gives $F(\bx)\geq F_\Bcal(\bx)$. Overall we get that $F(\bx)=F_\Bcal(\bx)$, completing the proof.

\end{proof}

Finally, given $L,\Delta>0$, let
\[
f(\bx):=\max\left\{L\cdot F(\bx),\,-\Delta\right\}~.
\]
We see that $f$ is $L$-Lipschitz since $F$ is $1$-Lipschitz, and satisfies
\[
f(\bzero)-\inf f=\max\{L\cdot 0,-\Delta\}-(-\Delta)= \Delta
~,
\]
hence $f\in\Fcal^d(L,\Delta)$.

Furthermore, for any $\bx\in\Xcal$ and $\delta<\frac{\Delta}{L}$, if $\by\in\BB^d(\bx,\delta)$ then the Lipschitz condition ensures that $f(\by)\geq f(\bx)-L\delta=-L\delta>-\Delta$, and so $f|_{\BB^d(\bx,\delta)}\equiv L\cdot F|_{\BB^d(\bx,\delta)}$, further implying that $\partial_\delta f(\bx)=L\cdot\partial_\delta F(\bx)$. In particular, following Lemma~\ref{lem: F}, we see that
\[
\dist(\bzero,\partial f_\delta(\bx))=L\cdot\dist(\bzero,\partial F_\delta(\bx))
\geq L\cdot \dist(\bzero,\Kcal_\bw)
\overset{\text{\eqref{eq: Kw dist}}}{\geq} L\eps'
>\eps~,
\]
and so
$\bx$ is not a $(\delta,\eps)$-Goldstein stationary point of $f$. We also see that the oracle responses are consistent with the resisting strategy in hindsight, since for any $\bx\in\BB^d(\bx_t,r):$
\[
f(\bx)= \max\{L\cdot F(\bx) ,-\Delta \}
= \max\{L\inner{\be_1,\bx-\bx_t} ,-\Delta \}
\overset{[r<\Delta/L]}{=} \inner{L\be_1,\bx-\bx_t}
=f_t(\bx)
\]
which completes the proof.

\section{Extensions and Implications} \label{sec: discuss}

In this concluding section, we discuss some extensions and implications of our main result.

\subsection{Regularity} \label{sec: regular}

As mentioned earlier, Theorem~\ref{thm: main} (and all of its corollaries to follow) remains true under further regularity assumptions on $f$, even as far as continuous differentiability.
In this sense, the hardness is not due to pathological subgradient behavior (cf. \citealp{daniilidis2020pathological}).
To see this, let $\eta>0$ be sufficiently small, and consider the mollified function
\[
\tilde{f}(\bx)
:=\E_{\bz\sim\mathrm{Unif}(\BB^d(\bzero,\eta))}[f(\bx+\bz)]
=\frac{1}{\mathrm{vol}(\BB^d(\bzero,\eta))}\int_{\BB^d(\bzero,\eta)}f(\bx+\bz)d\bz
~,
\]
which is $C^1$ smooth. Note that by Lemma~\ref{lem: F}, locally around each query point $\bx_t$ it holds that
\begin{align*}
\tilde{f}(\bx)
&=\frac{1}{\mathrm{vol}(\BB^d(\bzero,\eta))}\int_{\BB^d(\bzero,\eta)}\inner{L\be_1,\bx-\bx_t+\bz}d\bz
\\&=\frac{1}{\mathrm{vol}(\BB^d(\bzero,\eta))}\int_{\BB^d(\bzero,\eta)}\inner{L\be_1,\bx-\bx_t}d\bz+
\frac{1}{\mathrm{vol}(\BB^d(\bzero,\eta))}\int_{\BB^d(\bzero,\eta)}\inner{L\be_1,\bz}d\bz
\\&=\inner{L\be_1,\bx-\bx_t}+0
\\&=f(\bx)~,
\end{align*}
and so a local algorithm applied to $\tilde{f}$ results in the same iterates as if it were applied to $f$. We further argue that no $\bx_t$ is a $(\delta,\eps)$-Goldstein stationary point of $\tilde{f}$. Indeed, if it were, then \citet[Lemma 4]{kornowski2024algorithm} implies that it is also a $(\delta+\eta,\eps)$-Goldstein stationary point of $f$, thus contradicting Theorem~\ref{thm: main} if we set $\eta$ sufficiently small so that $\delta+\eta<\Delta/L$.

\subsection{Hardness of a single Goldstein-descent step} \label{sec: single step}

We would like to point out that what we have proved is in fact stronger than hardness of finding a Goldstein-stationary point (as it is phrased in Theorem~\ref{thm: main}). To this end, we
recall that Goldstein stationarity-based analyses date back to
Goldstein's descent lemma, which can be phrased as follows:
\begin{lemma}[\citealp{goldstein1977optimization}]
For any $f\in\Fcal^d(L,\Delta)$ and
$\bx\in\reals^d$, let $\bg_\bx$ be the minimum-norm element in $\partial_\delta f(\bx)$. If $\bg_\bx\neq\bzero$, then

\begin{equation}\label{eq: goldstein descent}
f\left(\bx-\delta\frac{\bg_\bx}{\|\bg_\bx\|}\right)\leq f(\bx)-\delta\|\bg_\bx\|
~.
\end{equation}

\end{lemma}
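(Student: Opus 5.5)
The plan is to run the classical one-dimensional argument along the segment from $\bx$ to $\by:=\bx-\delta\bg_\bx/\|\bg_\bx\|$, which lies entirely in $\BB^d(\bx,\delta)$. Write $\bu:=\bg_\bx/\|\bg_\bx\|$ and $\phi(s):=f(\bx-s\bu)$ for $s\in[0,\delta]$. Since $f$ is Lipschitz, $\phi$ is Lipschitz and hence absolutely continuous, so
\[
f(\by)-f(\bx)=\phi(\delta)-\phi(0)=\int_0^\delta \phi'(s)\,ds~.
\]
The goal is to show that $\phi'(s)\leq-\|\bg_\bx\|$ for almost every $s$. Integrating this bound then gives the claim.

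The first key step is the variational characterization of the minimum-norm element. The set $\partial_\delta f(\bx)$ is convex and compact (it is bounded by $L$, and closedness follows from upper semicontinuity of the Clarke subdifferential over the compact ball). Hence, for every $\bg\in\partial_\delta f(\bx)$, the projection inequality gives $\inner{\bg-\bg_\bx,\bg_\bx}\geq 0$. Equivalently, $\inner{\bg,\bu}\geq\|\bg_\bx\|$.

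The second step connects $\phi'$ to Clarke subgradients at points of the segment. At almost every $s$, the derivative $\phi'(s)$ exists, and by Clarke's mean value/chain rule we have $\phi'(s)\in\{\inner{\bg,-\bu}:\bg\in\partial f(\bx-s\bu)\}$. Since $\bx-s\bu\in\BB^d(\bx,\delta)$, any such $\bg$ lies in $\partial_\delta f(\bx)$. Combining this with the first step gives $\phi'(s)\leq-\|\bg_\bx\|$.

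The main subtlety is the claim that $\phi'(s)$ is realized by a Clarke subgradient at $\bx-s\bu$. A Lipschitz $f$ may fail to be differentiable on the whole segment, which could be a null set, so one cannot simply write $\phi'(s)=\inner{\nabla f,-\bu}$. The cleanest route is to use the Clarke directional derivative bound $\phi'(s)\leq f^\circ(\bx-s\bu;-\bu)=\max_{\bg\in\partial f(\bx-s\bu)}\inner{\bg,-\bu}$. This is a standard property: the one-sided directional derivative is dominated by the generalized one, and the latter is the support function of $\partial f$. Only this upper bound is needed, so the obstacle dissolves once it is cited.

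If one prefers to avoid that citation, an alternative is to perturb the segment to a parallel segment $\bx'-s\bu$ with $\bx'$ near $\bx$, on which $f$ is differentiable almost everywhere by Fubini. One applies the argument with true gradients at points within distance $\delta+\eta$, where those gradients lie in $\partial_{\delta+\eta}f(\bx)$, and then lets $\bx'\to\bx$ and $\eta\to0$ using continuity of $f$ together with outer semicontinuity of $\eta\mapsto\partial_{\delta+\eta}f(\bx)$.
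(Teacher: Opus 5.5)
Your proof is correct. The paper itself does not prove this lemma: it quotes it from \citet{goldstein1977optimization} only to motivate the corollary on descent directions, so there is no in-paper argument to compare against. What you give is the standard argument.

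The projection inequality gives $\inner{\bg,\bu}\geq\norm{\bg_\bx}$ for every $\bg\in\partial_\delta f(\bx)$. At any $s$ where $\phi$ is differentiable, $\phi'(s)$ is the one-sided directional derivative of $f$ at $\bz_s:=\bx-s\bu$ in direction $-\bu$. Hence $\phi'(s)\leq f^\circ(\bz_s;-\bu)=\max_{\bg\in\partial f(\bz_s)}\inner{\bg,-\bu}\leq-\norm{\bg_\bx}$, because $\norm{\bz_s-\bx}=s\leq\delta$ gives $\partial f(\bz_s)\subseteq\partial_\delta f(\bx)$. Integrating the absolutely continuous $\phi$ over $[0,\delta]$ finishes the proof.

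Three small remarks. First, the paper defines $\partial f$ through limits of gradients. The identity $f^\circ(\bz;\bv)=\max_{\bg\in\partial f(\bz)}\inner{\bg,\bv}$ therefore relies on Clarke's theorem that this gradient-limit set coincides with the subdifferential generated by $f^\circ$ \citep[Thm.~2.5.1]{Clarke-1990-Optimization}, and that is what you should cite.

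Second, a shorter route is Lebourg's mean value theorem. It gives $f(\by)-f(\bx)=\inner{\bg,\by-\bx}=-\delta\inner{\bg,\bu}$ for some $\bg\in\partial f(\bz)$ with $\bz$ on the segment, and the projection inequality then finishes in one line. Your integration of the one-sided bound is in effect a self-contained proof of the corresponding mean-value inequality. It avoids citing Lebourg at the cost of the almost-everywhere differentiability bookkeeping.

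Third, your Fubini alternative also works. What the limit $\eta\to0$ actually requires is $\min_{\bg\in\partial_{\delta+\eta}f(\bx)}\inner{\bg,\bu}\to\min_{\bg\in\partial_\delta f(\bx)}\inner{\bg,\bu}$ for the fixed direction $\bu$. This follows because the nested compact sets $\partial_{\delta+\eta}f(\bx)$ decrease to $\partial_\delta f(\bx)$, which is the outer semicontinuity you invoke.
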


Goldstein's lemma motivates the conceptual algorithm $\bx_{t+1}=\bx-\delta\frac{\bg_\bx}{\|\bg_\bx\|}$, requiring the computation of $\bg_\bx$ at each step, or more feasibly, the approximation thereof so that \eqref{eq: goldstein descent} applies and a descent is found.
Notably, this is the starting point of the design of gradient sampling methods
\citep{Burke-2002-Approximating,Burke-2005-Robust,Burke-2020-Gradient,Kiwiel-2007-Convergence}.

The proof of Theorem~\ref{thm: main} establishes more than the lack of stationarity of the iterates $\bx_1,\dots,\bx_T$ for any deterministic algorithm in the worst case, but moreover,
that they all satisfy $f(\bx_t)=0$ by Lemma~\ref{lem: F} $(iii)$. Therefore, we get:

\begin{corollary}
No deterministic local algorithm can, given a point $\bx$, guarantee finding a single descent direction $\bg_\bx$ which satisfies \eqref{eq: goldstein descent}  unless $T\gtrsim \exp(\Omega(d))$.
\end{corollary}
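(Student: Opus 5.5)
The plan is to reuse the resisting-oracle construction from the proof of Theorem~\ref{thm: main} verbatim and check that it also defeats any claimed descent direction. Suppose a deterministic local algorithm makes $T$ queries and then outputs, for a given point $\bx$ (say $\bx=\bzero=\bx_1$), a vector $\bg_\bx$ intended to satisfy \eqref{eq: goldstein descent}. The guarantee we must rule out is stronger than locating a stationary point: the algorithm must certify an actual decrease of $\delta\|\bg_\bx\|$ in a direction it has chosen.

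Answer every query with $f_t(\by)=\inner{L\be_1,\by-\bx_t}$. Determinism then fixes both the iterates $\bx_1,\dots,\bx_T$ and the output direction. Next, augment the set of points to be protected. Put $\Xcal$ equal to the queried points together with $\bx$ and the candidate next point $\bx^+:=\bx-\delta\bg_\bx/\|\bg_\bx\|$, so that $|\Xcal|\leq T+2$. Apply Lemma~\ref{lem: sphere cap} to this set, which costs only a constant factor in $T$, to obtain $\bw$. Then build $F$ and $f=\max\{LF,-\Delta\}$ exactly as before.

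By Lemma~\ref{lem: F}(iii), every point of $\Xcal$ has $F=0$, hence $f=0$ there. In particular $f(\bx^+)=f(\bx)=0$. The oracle answers remain consistent, and the iterates were not stationary. On the other hand, Lemma~\ref{lem: F}(ii) together with the argument at the end of the proof of Theorem~\ref{thm: main} gives that the minimum-norm element of $\partial_\delta f(\bx)$ has norm at least $L\eps'>\eps>0$. Hence \eqref{eq: goldstein descent} demands $f(\bx^+)\leq -\delta\|\bg_\bx\|<0$, which contradicts $f(\bx^+)=0$. The same contradiction arises if the algorithm outputs some other vector $\bg$ and claims it satisfies the inequality with $\|\bg\|$ in place of $\|\bg_\bx\|$: whenever $\bg\neq\bzero$, the required decrease is strictly positive.

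The step I expect to be delicate is the bookkeeping at the end. Adding $\bx^+$ to $\Xcal$ means some algorithm query may coincide with it, and the separation radius $r$ may shrink. Both are harmless: Lemma~\ref{lem: F} only requires the points to be distinct, and we simply drop duplicates. The remaining task is to track constants. The bound $T+2\leq(16\eps')^{-d/2}$ gives the stated $T\gtrsim\exp(\Omega(d))$ for fixed $\eps$, or more precisely $T\gtrsim(L/32\eps)^{d/2}$ in the parameter regime of Theorem~\ref{thm: main}.
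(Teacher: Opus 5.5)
Your proposal is correct and follows essentially the same route as the paper, which simply observes that the resisting construction forces $f(\bx_t)=0$ at every iterate. You add $\bx$ and $\bx^+=\bx-\delta\bg_\bx/\|\bg_\bx\|$ to the protected set (equivalently, you treat $\bx^+$ as one extra query), which makes explicit why the candidate point cannot fall below $f(\bx)=0$, at the harmless cost of replacing $T$ by $T+2$.
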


\subsection{Relaxed stationarity}

Our hardness result further applies to a recently proposed relaxed stationarity notion which is even easier to satisfy.
Specifically, \citet{zhang2024randomscaling} defined a relaxation of Goldstein stationarity which allows gradients to be combined beyond a ball. Formally, let $\lambda>0$, and denote
\[
\mathcal{P}_\lambda f(\bx):=\inf_{\substack{\text{random vector}~\by\in\reals^d\\\bg\in\partial f(\by)\text{~a.s.}}}\left\{\|\E[\bg]\|+\lambda\cdot\E\|\bx-\by\|^2\right\}
~.
\]
A point is called a $(\lambda,\epsilon)$-stationary point of $f$ if $\Pcal_\lambda f(\bx)\leq\eps$.\footnote{We note that \citet{zhang2024randomscaling} additionally required in the definition of $\Pcal_\lambda$ that $\E[\by]=\bx$. Dropping this requirement only strengthens our lower bound, and allows a more direct comparison to Goldstein stationarity.}
Note that by restricting to distributions over $\by$
supported on $\BB(\bx,\delta)$, it is clear that $\Pcal_\lambda f(\bx)\leq\dist(\bzero,\partial_\delta f(\bx))+\lambda\delta^2$, and so any $(\sqrt{\eps/2\lambda},\eps/2)$-Goldstein stationary point is $(\lambda,\eps)$-stationary, showing that this stationarity notion is indeed a more relaxed one.
This notion has found use in analyzing randomized variants of various algorithms which are popular in machine learning practice, such as Adam \citep{ahn2024adam}, schedule-free methods \citep{ahn2025general},
and spectral optimizers \citep{jiang2026adaptive,li2026muon}.

The proof of Theorem~\ref{thm: main} can be used to further show the same exponential lower bound for deterministically finding $(\lambda,\eps)$-stationary points.
Indeed, suppose $\eps<\lambda\Delta^2/2L^2$.
The proof considers $f=\max\{L\cdot F,-\Delta\}\in\Fcal^d(L,\Delta)$, and by Lemma~\ref{lem: F} it holds that $\partial F(\by)\subseteq \Kcal_\bw$ for all $\by\in\reals^d$, and $f(\bx_t)=\max\{0,-\Delta\}=0$ for all $t\in[T]$.
Therefore, for any $t\in[T]$, any vector $\by\in\reals^d$ and $\bg\in\partial f(\by)$, if $\|\by-\bx_t\|<\Delta/L$ then
\[
f(\by)> f(\bx_t)-\Delta=-\Delta
~~~\implies~~~
\partial f(\by)=\partial F(\by)\subseteq\Kcal_\bw
~~~\implies~~~
\binner{\bg,\bw}\geq 2\eps~,
\]
whereas if $\|\by-\bx_t\|\geq \Delta/L$ then
\[
\lambda\|\bx_t-\by\|^2\geq 2\eps~,
\]
so overall for any random vector $\by$ and $\bg\in\partial f(\by):$
\[
\|\E[\bg]\|+\lambda\E\|\bx_t-\by\|^2
\geq
\E\left[\inner{\bg,\bw}+\lambda\E\|\bx_t-\by\|^2\right]
\geq 2\eps~.
\]
Taking the infimum over all such random vectors proves that $\Pcal_\lambda f(\bx_t)\geq2\eps$ for all $t\in[T]$, and so we get:

\begin{corollary}
No deterministic local algorithm can guarantee finding a $(\lambda,\eps)$-stationary point unless $T\gtrsim \exp(\Omega(d))$.
\end{corollary}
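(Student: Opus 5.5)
The plan is to reuse the construction from the proof of Theorem~\ref{thm: main} verbatim and derive the $(\lambda,\eps)$-stationarity lower bound directly from it. The resisting oracle answers $\Ocal_{f_t}(\bx_t)$ with $f_t(\bx)=\inner{L\be_1,\bx-\bx_t}$. This fixes the iterates $\bx_1,\dots,\bx_T$ of any deterministic local algorithm.

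Provided $T\leq(16\eps')^{-d/2}$, Lemma~\ref{lem: sphere cap} yields a direction $\bw$, and Lemma~\ref{lem: F} together with the truncation gives $f=\max\{L F,-\Delta\}\in\Fcal^d(L,\Delta)$. This $f$ is consistent with every oracle response, satisfies $f(\bx_t)=0$ for every $t$, and has $\partial F\subseteq\Kcal_\bw$ everywhere. It then remains to show $\Pcal_\lambda f(\bx_t)>\eps$ for every $t$, under the regime assumption $\eps<\lambda\Delta^2/2L^2$, which I would add to the hypotheses.

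I would fix $t$, a random vector $\by$, and a measurable selection $\bg\in\partial f(\by)$, and split according to the event $E=\{\|\by-\bx_t\|<\Delta/L\}$.

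\textbf{On $E$.} By the Lipschitz property, $f(\by)>f(\bx_t)-\Delta=-\Delta$. By continuity the truncation is inactive on a neighborhood of $\by$, so $\partial f(\by)=L\,\partial F(\by)\subseteq L\Kcal_\bw$. Hence $\inner{\bg,\bw}\geq L\eps'=2\eps$.

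\textbf{Off $E$.} Here $\lambda\|\bx_t-\by\|^2\geq\lambda\Delta^2/L^2>2\eps$. For the gradient term, $\partial f(\by)\subseteq\conv(L\Kcal_\bw\cup\{\bzero\})$, since the Clarke subgradients of the max of $LF$ and a constant are combinations of $\bzero$ and elements of $L\partial F$. Therefore $\inner{\bg,\bw}\geq0$ pointwise.

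\textbf{Combining.} Pointwise,
\[
\inner{\bg,\bw}+\lambda\|\bx_t-\by\|^2\geq 2\eps .
\]
Taking expectations and using $\|\E[\bg]\|\geq\inner{\E[\bg],\bw}=\E\inner{\bg,\bw}$ gives $\|\E\bg\|+\lambda\E\|\bx_t-\by\|^2\geq2\eps>\eps$. Taking the infimum over random vectors gives $\Pcal_\lambda f(\bx_t)\geq 2\eps$. So no iterate is $(\lambda,\eps)$-stationary unless $T>(L/32\eps)^{d/2}=\exp(\Omega(d))$.

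The main obstacle is the case off $E$. There, nonnegativity of $\inner{\bg,\bw}$ is needed so that the linear term cannot cancel the quadratic penalty. The paper's text glosses over this, and it must be justified from the structure of the subdifferential of the truncated function, as done above. A secondary technicality is measurability of the selection $\bg$, which is implicit in the definition of $\Pcal_\lambda$ and need not be addressed separately.
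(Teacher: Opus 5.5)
Your proposal is correct and follows the paper's argument essentially verbatim. It uses the same hard function $f=\max\{L F,-\Delta\}$, the same split on whether $\|\by-\bx_t\|<\Delta/L$, and the same pointwise bound on $\inner{\bg,\bw}+\lambda\|\bx_t-\by\|^2$ followed by $\|\E[\bg]\|\geq\inner{\E[\bg],\bw}$; your justification that $\inner{\bg,\bw}\geq 0$ in the far case via $\partial f(\by)\subseteq\conv(L\Kcal_\bw\cup\{\bzero\})$, and your tracking of the factor $L$ in $\partial f(\by)=L\,\partial F(\by)$ near $\bx_t$, correctly fill in steps the paper leaves implicit.
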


\subsection{Deterministic smoothing}

Our result also has implications for the complexity of deterministically
smoothing nonsmooth functions. Following
\citet[Definition~6]{jordan2023deterministic}, consider a smoothing algorithm with complexity $T$ to be an algorithm which, given oracle access to a function $f\in\Fcal^d(L,\Delta)$, computes
$(\tilde f(\bx),\nabla\tilde f(\bx))$ at any requested point $\bx$
using at most $T$ oracle calls, for some smooth function $\tilde f$.
The smoothing is called \emph{meaningful} if $\nabla\tilde f$ is
$L_1$-Lipschitz with $L_1\leq\mathrm{poly}(d)$,
$\tilde{f}(\bzero)-\inf\tilde{f}\leq\mathrm{poly}(d)$
and any
$(\delta,\eps)$-Goldstein stationary point of $\tilde f$ is a
$(p_1(\delta,\eps),p_2(\delta,\eps))$-Goldstein stationary point of $f$,
where $p_1,p_2$ are fixed, dimension-independent polynomials tending
to zero as $(\delta,\eps)\to(0,0)$. Thus, approximate stationarity
of the smoothed function provides an approximate-stationarity
guarantee for the original function, allowing a reduction to smooth optimization. It should be noted that there exist simple and efficient randomized smoothing methods satisfying these guarantees (in expectation) \citep{flaxman2005online,yousefian2012stochastic,duchi2012randomized,shamir2017optimal}, whereas \citet[Theorem 7]{jordan2023deterministic} proved that no such deterministic algorithm has dimension-free complexity.

As a corollary of our result, we can strengthen this conclusion to an exponential lower bound.
By
\citet[Theorem~5]{jordan2023deterministic}, there is a deterministic first-order
algorithm that finds a $(\delta,\eps)$-Goldstein stationary point of
$\tilde f$ with only logarithmic dependence on $L_1$. Applying this
algorithm on top of a smoothing procedure therefore finds a
$(p_1(\delta,\eps),p_2(\delta,\eps))$-Goldstein stationary point of $f$
using $\tilde{O}\left(\frac{T}{\delta\eps^3}\log\left(1+L_1\right)\right)$ oracle calls to $f$.
Letting $\delta,\eps>0$ be sufficiently small, dimension-independent
constants such that $p_1(\delta,\eps)<1$ and
$p_2(\delta,\eps)<1/32$, apply Theorem~\ref{thm: main} to the resulting
deterministic algorithm to see that
\[
    T\log(1+L_1)\gtrsim \exp(\Omega(d))~.
\]
Since meaningful smoothing requires $L_1\leq\operatorname{poly}(d)$,
we conclude that:

\begin{corollary} \label{cor: smoothing}
Any meaningful deterministic smoothing has complexity
$T\gtrsim\exp(\Omega(d))$.
\end{corollary}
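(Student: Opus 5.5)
The corollary follows by a reduction: composing a deterministic smoothing procedure with a deterministic algorithm for smooth functions gives a deterministic local algorithm for the original nonsmooth $f$. Theorem~\ref{thm: main} then bounds the cost of that composed algorithm from below.

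\textbf{Step 1 (composing the algorithms).} Fix a meaningful deterministic smoothing of complexity $T$, with polynomials $p_1,p_2$ and $L_1\le \poly(d)$, $\tilde f(\bzero)-\inf\tilde f\le\poly(d)$. Run the deterministic first-order algorithm of \citet[Theorem~5]{jordan2023deterministic} on $\tilde f$. Each of its queries $(\tilde f(\bx),\nabla\tilde f(\bx))$ is answered by the smoothing procedure using at most $T$ calls to the local oracle of $f$.

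The composite procedure is deterministic. Each query point it issues to $f$ depends only on earlier oracle responses of $f$, so it is a deterministic local algorithm for $f$ in the sense of Section~\ref{sec: prelim}.

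\textbf{Step 2 (cost and stationarity).} Fix dimension-independent constants $\delta_0,\eps_0>0$ with $p_1(\delta_0,\eps_0)<\Delta/L$ and $p_2(\delta_0,\eps_0)<L/32$. Normalizing $L=\Delta=1$, these are the conditions $p_1<1$ and $p_2<1/32$. Such constants exist because $p_1,p_2\to0$.

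The smooth solver returns a $(\delta_0,\eps_0)$-Goldstein stationary point of $\tilde f$ after $\tilde O\big(\delta_0^{-1}\eps_0^{-3}\log(1+L_1)\big)$ gradient queries. The range $\tilde f(\bzero)-\inf\tilde f\le\poly(d)$ enters only logarithmically (or polynomially), which does not affect the exponential comparison below. By meaningfulness, the returned point is a $(p_1,p_2)$-Goldstein stationary point of $f$. The total number of oracle calls to $f$ is therefore $N=\tilde O\big(T\log(1+L_1)\big)$, up to constants depending only on $\delta_0,\eps_0$.

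\textbf{Step 3 (applying the main theorem).} Apply Theorem~\ref{thm: main} with $\delta=p_1$ and $\eps=p_2$; its hypotheses hold for all $d\ge\lceil2\log_2(L/8p_2)\rceil$, which is a constant. It gives
\[
N>\left(\frac{L}{32p_2}\right)^{d/2}=\exp(\Omega(d)),
\]
since $L/(32p_2)>1$ is a fixed constant. Hence $T\log(1+L_1)\cdot\mathrm{polylog}\gtrsim\exp(\Omega(d))$. Since $L_1\le\poly(d)$, we have $\log(1+L_1)=O(\log d)$, and therefore $T\gtrsim\exp(\Omega(d))$.

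\textbf{Main obstacle.} The delicate step is Step 1's claim that the composite algorithm is a legitimate deterministic local algorithm. Theorem~\ref{thm: main}'s resisting oracle must be able to drive it, so the smoothing procedure has to access $f$ only through the local oracle. The other point to check is that the hidden polylogarithmic factors in the bound of \citet[Theorem~5]{jordan2023deterministic} depend on $d$ at most polynomially, so that they cannot absorb the exponential. I would verify both directly from the definitions in \citet{jordan2023deterministic}.
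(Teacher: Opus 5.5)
Your proposal is correct and is essentially the paper's own argument. You compose the smoothing with the deterministic solver of \citet[Theorem~5]{jordan2023deterministic}, fix constants $(\delta,\eps)$ with $p_1<1$ and $p_2<1/32$, apply Theorem~\ref{thm: main} to the composite deterministic local algorithm, and absorb $\log(1+L_1)=O(\log d)$.

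One minor point that both you and the paper leave implicit: the smoothing procedure need not query $f$ at the returned point itself. The composite algorithm should therefore spend one extra oracle call there, so that the returned point is an iterate to which Theorem~\ref{thm: main} applies.
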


\begin{remark}\label{rem: integral}
It is interesting to note the analogy with the curse of dimensionality
in numerical integration. For example,
\citet[Proposition~3.2]{hinrichs2014curse} show that deterministically
integrating $1$-Lipschitz functions $h:[0,1]^d\to[-1,1]$ requires
at least $\exp(\Omega(d))$ function evaluations, even for a fixed
constant accuracy. In contrast, averaging the function values at $n$ independent uniformly random points yields a root-mean-square error at most $1/\sqrt{n}$,
independently of $d$, allowing randomized integration with dimension-free oracle complexity. This analogy is particularly natural for
convolution-based smoothing, whose evaluation amounts to computing
a high-dimensional integral. However, the corollary above is not
restricted to evaluating a prescribed convolution, as it establishes exponential hardness for any meaningful deterministic smoothing.
\end{remark}

\subsection{Analogy to volume estimation}

Finally, following the spirit of Remark~\ref{rem: integral}, we discuss an even looser analogy that we find thought-provoking nonetheless.
Another computational task for which randomization provably offers an exponential advantage is that of estimating the volume of a convex body given membership oracle queries.
In particular, estimating $\mathrm{vol}(\Kcal)$ for a convex body $\Kcal\subset\reals^d$
to a fixed accuracy requires
$\exp(\Omega(d))$ deterministic oracle queries \citep{furedi1986computing}; however, a randomized algorithm can achieve this goal using only
$\operatorname{poly}(d)$ queries.
\citep{dyer1991computing,dyer1991random}.

This analogy has an intuitive, albeit informal, geometric interpretation.
As discussed in Section~\ref{sec: single step}, algorithms seeking Goldstein stationarity require ``exploring'' the Goldstein subdifferential of each iterate, in order to find a descent direction. Moreover, note that any convex body $\Kcal\subset\reals^d$ can be realized as a Goldstein subdifferential of its support function $\Kcal=\partial_\delta \sigma_{\Kcal}(\bzero)$.
Therefore, at a geometric level, both problems illustrate the difficulty of extracting certain global information (volume or shortest vector) from deterministic queries in high dimensions,
essentially due to the fact that covering numbers in $\reals^d$ scale exponentially with $d$.
We stress that the analogy should not be interpreted as a formal reduction, and we are not aware of one.

\subsection*{Statement on AI use}

The results in this paper were derived entirely by the author.
After preparing the manuscript, and before making it public, we asked ChatGPT 5.6 Sol to review it. In addition to various minor writing suggestions, the AI found a simplification of the hard function construction based on support functions, which is the version we ultimately chose to present here, after thoroughly revising and rewriting the details ourselves.
Our original construction was similar in terms of its properties and intuition, yet it boiled down to an explicit modification procedure around iterates (similarly to both \citealp{jordan2023deterministic,tian2024no}) which was considerably more tedious to analyze.
In our view, the modified construction is more elegant, and it also helps streamline the proofs of the further extensions in Section~\ref{sec: discuss}.
The author takes full responsibility for the paper’s content.


\bibliographystyle{plainnat}
\bibliography{bib}

@article{li2011concise,
  title={Concise Formulas for the Area and Volume of a Hyperspherical Cap},
  author={Li, S},
  journal={Asian Journal of Mathematics \& Statistics},
  volume={4},
  number={1},
  pages={66--70},
  year={2011},
  publisher={Science Alert}
}

@inproceedings{jordan2023deterministic,
  title={Deterministic nonsmooth nonconvex optimization},
  author={Jordan, Michael and Kornowski, Guy and Lin, Tianyi and Shamir, Ohad and Zampetakis, Manolis},
  booktitle={The Thirty Sixth Annual Conference on Learning Theory},
  pages={4570--4597},
  year={2023},
  organization={PMLR}
}

@article{tian2024no,
  title={No dimension-free deterministic algorithm computes approximate stationarities of Lipschitzians},
  author={Tian, Lai and So, Anthony Man-Cho},
  journal={Mathematical Programming},
  volume={208},
  number={1},
  pages={51--74},
  year={2024},
  publisher={Springer}
}

@inproceedings{zhang2020complexity,
  title={Complexity of finding stationary points of nonconvex nonsmooth functions},
  author={Zhang, Jingzhao and Lin, Hongzhou and Jegelka, Stefanie and Sra, Suvrit and Jadbabaie, Ali},
  booktitle={International Conference on Machine Learning},
  pages={11173--11182},
  year={2020},
  organization={PMLR}
}

@book{Clarke-1990-Optimization,
  title={Optimization and nonsmooth analysis},
  author={Clarke, Frank H},
  year={1990},
  publisher={SIAM}
}

@article{goldstein1977optimization,
  title={Optimization of Lipschitz continuous functions},
  author={Goldstein, Allen A},
  journal={Mathematical Programming},
  volume={13},
  pages={14--22},
  year={1977},
  publisher={Springer}
}

@book{Nemirovski-1983-Problem,
	author = {Nemirovski, Arkadi Semenovich and Yudin, David Borisovich},
	title  = {Problem complexity and method efficiency in optimization.},
	publisher = {Wiley},
	year   = {1983}
}

@article{murty1987some,
  title={Some NP-complete problems in quadratic and nonlinear programming.},
  author={Murty, Katta G and Kabadi, Santosh N},
  journal={Mathematical programming},
  volume={39},
  number={2},
  pages={117--129},
  year={1987}
}

@article{kornowski2022oracle,
  title={Oracle complexity in nonsmooth nonconvex optimization},
  author={Kornowski, Guy and Shamir, Ohad},
  journal={Journal of Machine Learning Research},
  volume={23},
  number={314},
  pages={1--44},
  year={2022}
}

@article{ghadimi2013stochastic,
  title={Stochastic first-and zeroth-order methods for nonconvex stochastic programming},
  author={Ghadimi, Saeed and Lan, Guanghui},
  journal={SIAM Journal on Optimization},
  volume={23},
  number={4},
  pages={2341--2368},
  year={2013},
  publisher={SIAM}
}

@article{carmon2020lower,
  title={Lower bounds for finding stationary points {I}},
  author={Carmon, Yair and Duchi, John C and Hinder, Oliver and Sidford, Aaron},
  journal={Mathematical Programming},
  volume={184},
  number={1-2},
  pages={71--120},
  year={2020},
  publisher={Springer}
}

@article{davis2020stochastic,
  title={Stochastic subgradient method converges on tame functions},
  author={Davis, Damek and Drusvyatskiy, Dmitriy and Kakade, Sham and Lee, Jason D},
  journal={Foundations of computational mathematics},
  volume={20},
  number={1},
  pages={119--154},
  year={2020},
  publisher={Springer}
}

@article{davis2022gradient,
  title={A gradient sampling method with complexity guarantees for Lipschitz functions in high and low dimensions},
  author={Davis, Damek and Drusvyatskiy, Dmitriy and Lee, Yin Tat and Padmanabhan, Swati and Ye, Guanghao},
  journal={Advances in neural information processing systems},
  volume={35},
  pages={6692--6703},
  year={2022}
}

@inproceedings{Tian-2022-Finite,
  title={On the finite-time complexity and practical computation of approximate stationarity concepts of lipschitz functions},
  author={Tian, Lai and Zhou, Kaiwen and So, Anthony Man-Cho},
  booktitle={International Conference on Machine Learning},
  pages={21360--21379},
  year={2022},
  organization={PMLR}
}

@inproceedings{cutkosky2023optimal,
  title={Optimal stochastic non-smooth non-convex optimization through online-to-non-convex conversion},
  author={Cutkosky, Ashok and Mehta, Harsh and Orabona, Francesco},
  booktitle={International Conference on Machine Learning},
  pages={6643--6670},
  year={2023},
  organization={PMLR}
}

@article{lin2022gradient,
  title={Gradient-free methods for deterministic and stochastic nonsmooth nonconvex optimization},
  author={Lin, Tianyi and Zheng, Zeyu and Jordan, Michael},
  journal={Advances in Neural Information Processing Systems},
  volume={35},
  pages={26160--26175},
  year={2022}
}

@inproceedings{chen2023faster,
  title={Faster gradient-free algorithms for nonsmooth nonconvex stochastic optimization},
  author={Chen, Lesi and Xu, Jing and Luo, Luo},
  booktitle={International Conference on Machine Learning},
  pages={5219--5233},
  year={2023},
  organization={PMLR}
}

@article{kornowski2024algorithm,
  title={An algorithm with optimal dimension-dependence for zero-order nonsmooth nonconvex stochastic optimization},
  author={Kornowski, Guy and Shamir, Ohad},
  journal={Journal of Machine Learning Research},
  volume={25},
  number={122},
  pages={1--14},
  year={2024},
}

@inproceedings{liu2024zeroth,
  title={Zeroth-Order Methods for Constrained Nonconvex Nonsmooth Stochastic Optimization},
  author={Liu, Zhuanghua and Chen, Cheng and Luo, Luo and Low, Bryan Kian Hsiang},
  booktitle={International Conference on Machine Learning},
  pages={30842--30872},
  year={2024},
  organization={PMLR}
}

@inproceedings{zhang2023private,
  title={Private Zeroth-Order Nonsmooth Nonconvex Optimization},
  author={Zhang, Qinzi and Tran, Hoang and Cutkosky, Ashok},
  booktitle={The Twelfth International Conference on Learning Representations},
  year={2024}
}

@inproceedings{kornowski2025improved,
  title={Improved Sample Complexity for Private Nonsmooth Nonconvex Optimization},
  author={Kornowski, Guy and Liu, Daogao and Talwar, Kunal},
  booktitle={International Conference on Machine Learning},
  pages={31541--31559},
  year={2025},
  organization={PMLR}
}

@article{grimmer2025goldstein,
  title={Goldstein stationarity in Lipschitz constrained optimization: B. Grimmer, Z. Jia},
  author={Grimmer, Benjamin and Jia, Zhichao},
  journal={Optimization Letters},
  volume={19},
  number={2},
  pages={425--435},
  year={2025},
  publisher={Springer}
}

@article{liu2024gradient,
  title={Gradient-free methods for nonconvex nonsmooth stochastic compositional optimization},
  author={Liu, Zhuanghua and Luo, Luo and Low, Bryan Kian Hsiang},
  journal={Advances in Neural Information Processing Systems},
  volume={37},
  pages={45438--45461},
  year={2024}
}

@article{shi2026nonsmooth,
  title={Nonsmooth Nonconvex-Concave Minimax Optimization: Convergence Criteria and Algorithms},
  author={Shi, Jinyang and Luo, Luo},
  journal={arXiv preprint arXiv:2604.21371},
  year={2026}
}

@inproceedings{lin2024decentralized,
  title={Decentralized gradient-free methods for stochastic non-smooth non-convex optimization},
  author={Lin, Zhenwei and Xia, Jingfan and Deng, Qi and Luo, Luo},
  booktitle={Proceedings of the AAAI Conference on Artificial Intelligence},
  volume={38},
  number={16},
  pages={17477--17486},
  year={2024}
}

@article{guan2026computing,
  title={On computing Goldstein approximate second-order stationary points of structured nonsmooth nonconvex programs},
  author={Guan, Jiewen and So, Anthony Man-Cho},
  journal={arXiv preprint arXiv:2607.24122},
  year={2026}
}

@article{daniilidis2020pathological,
  title={Pathological subgradient dynamics},
  author={Daniilidis, Aris and Drusvyatskiy, Dmitriy},
  journal={SIAM Journal on Optimization},
  volume={30},
  number={2},
  pages={1327--1338},
  year={2020},
  publisher={SIAM}
}

@Article{Burke-2002-Approximating,
    Title           = {Approximating subdifferentials by random sampling of gradients},
    Author          = {Burke, James V and Lewis, Adrian S and Overton, Michael L},
    Journal         = {Mathematics of Operations Research},
    Volume          = {27},
    Number          = {3},
    Pages           = {567-584},
    Year            = {2002},
    Publisher       = {INFORMS}
}

@Article{Burke-2005-Robust,
    Title           = {A robust gradient sampling algorithm for nonsmooth, nonconvex optimization},
    Author          = {Burke, James V and Lewis,  Adrian S and Overton, Michael L},
    Journal         = {SIAM Journal on Optimization},
    Volume          = {15},
    Number          = {3},
    Pages           = {751-779},
    Year            = {2005},
    Publisher       = {SIAM}
}

@Article{Kiwiel-2007-Convergence,
    Title           = {Convergence of the gradient sampling algorithm for nonsmooth nonconvex optimization},
    Author          = {Kiwiel, Krzysztof C },
    Journal         = {SIAM Journal on Optimization},
    Volume          = {18},
    Number          = {2},
    Pages           = {379-388},
    Year            = {2007},
    Publisher       = {SIAM}
}

@article{Burke-2020-Gradient,
  title={Gradient sampling methods for nonsmooth optimization},
  author={Burke, James V and Curtis, Frank E and Lewis, Adrian S and Overton, Michael L and Sim{\~o}es, Lucas EA},
  journal={Numerical nonsmooth optimization: State of the art algorithms},
  pages={201--225},
  year={2020},
  publisher={Springer}
}

@InProceedings{zhang2024randomscaling,
  title = 	 {Random Scaling and Momentum for Non-smooth Non-convex Optimization},
  author =       {Zhang, Qinzi and Cutkosky, Ashok},
  booktitle = 	 {Proceedings of the 41st International Conference on Machine Learning},
  pages = 	 {58780--58799},
  year = 	 {2024},
  volume = 	 {235},
  series = 	 {Proceedings of Machine Learning Research},
  month = 	 {21--27 Jul},
  publisher =    {PMLR},
}

@article{ahn2024adam,
  title={Adam with model exponential moving average is effective for nonconvex optimization},
  author={Ahn, Kwangjun and Cutkosky, Ashok},
  journal={Advances in Neural Information Processing Systems},
  volume={37},
  pages={94909--94933},
  year={2024}
}

@InProceedings{jiang2026adaptive,
  title = 	 {Adaptive Matrix Online Learning through Smoothing with Guarantees for Nonsmooth Nonconvex Optimization},
  author =       {Jiang, Ruichen and Mhammedi, Zakaria and Mohri, Mehryar and Mokhtari, Aryan},
  booktitle = 	 {Proceedings of Thirty Ninth Conference on Learning Theory},
  pages = 	 {3782--3824},
  year = 	 {2026},
  volume = 	 {336},
  series = 	 {Proceedings of Machine Learning Research},
  month = 	 {29 Jun--03 Jul},
  publisher =    {PMLR},
}

@InProceedings{ahn2025general,
  title = 	 {General framework for online-to-nonconvex conversion: Schedule-free {SGD} is also effective for nonconvex optimization},
  author =       {Ahn, Kwangjun and Magakyan, Gagik and Cutkosky, Ashok},
  booktitle = 	 {Proceedings of the 42nd International Conference on Machine Learning},
  pages = 	 {772--795},
  year = 	 {2025},
  volume = 	 {267},
  series = 	 {Proceedings of Machine Learning Research},
  month = 	 {13--19 Jul},
  publisher =    {PMLR},
}

@article{li2026muon,
  title={Muon with Finite Newton-Schulz: The Smoothing Benefit in Nonsmooth Nonconvex Optimization},
  author={Li, Mingyi and Tsuchiya, Taira},
  journal={arXiv preprint arXiv:2608.26288},
  year={2026}
}

@inproceedings{flaxman2005online,
  title={Online convex optimization in the bandit setting: gradient descent without a gradient},
  author={Flaxman, Abraham D and Kalai, Adam Tauman and McMahan, H Brendan},
  booktitle={Proceedings of the sixteenth annual ACM-SIAM symposium on Discrete algorithms},
  pages={385--394},
  year={2005}
}

@article{yousefian2012stochastic,
  title={On stochastic gradient and subgradient methods with adaptive steplength sequences},
  author={Yousefian, Farzad and Nedi{\'c}, Angelia and Shanbhag, Uday V},
  journal={Automatica},
  volume={48},
  number={1},
  pages={56--67},
  year={2012},
  publisher={Elsevier}
}

@article{duchi2012randomized,
  title={Randomized smoothing for stochastic optimization},
  author={Duchi, John C and Bartlett, Peter L and Wainwright, Martin J},
  journal={SIAM Journal on Optimization},
  volume={22},
  number={2},
  pages={674--701},
  year={2012},
  publisher={SIAM}
}

@article{shamir2017optimal,
  title={An optimal algorithm for bandit and zero-order convex optimization with two-point feedback},
  author={Shamir, Ohad},
  journal={Journal of Machine Learning Research},
  volume={18},
  number={52},
  pages={1--11},
  year={2017}
}

@article{hinrichs2014curse,
  title={The curse of dimensionality for numerical integration of smooth functions II},
  author={Hinrichs, Aicke and Novak, Erich and Ullrich, Mario and Wo{\'z}niakowski, Henryk},
  journal={Journal of Complexity},
  volume={30},
  number={2},
  pages={117--143},
  year={2014},
  publisher={Elsevier}
}

@inproceedings{furedi1986computing,
  title={Computing the volume is difficult},
  author={Furedi, Z and Barany, I},
  booktitle={Proceedings of the eighteenth annual ACM symposium on Theory of computing},
  pages={442--447},
  year={1986}
}

@article{dyer1991computing,
  title={Computing the volume of convex bodies: a case where randomness provably helps},
  author={Dyer, Martin and Frieze, Alan},
  journal={Probabilistic combinatorics and its applications},
  volume={44},
  number={123-170},
  pages={0754--68052},
  year={1991},
  publisher={American Mathematical Society Providence, RI}
}

@article{dyer1991random,
  title={A random polynomial-time algorithm for approximating the volume of convex bodies},
  author={Dyer, Martin and Frieze, Alan and Kannan, Ravi},
  journal={Journal of the ACM (JACM)},
  volume={38},
  number={1},
  pages={1--17},
  year={1991},
  publisher={ACM New York, NY, USA}
}

\appendix

\section{Spherical cap measure bound}

\begin{lemma} \label{lem: cap measure}
For $k\in\NN$, let $\mu_k$ be the normalized surface measure on $\SS^{k}$, and let $\bu\in\SS^{k}$. Then for any $r\in(0,1):~
\mu_k(\{\bz\in\SS^k:\|\bz-\bu\|\leq r\})
\leq (2r)^k$.
\end{lemma}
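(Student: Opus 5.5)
The plan is to reduce the cap measure to a volume ratio in $\reals^{k+1}$ via a cone construction, and then compare the cone to a small ball. For a measurable set $A\subseteq\SS^k$, let $C(A):=\{t\bz:\bz\in A,\,t\in[0,1]\}$. Then
\[
\mu_k(A)=\frac{\mathrm{vol}_{k+1}(C(A))}{\mathrm{vol}_{k+1}(\BB^{k+1})}~,
\]
by polar coordinates, since both the cap's surface measure and the cone's volume scale by the same factor relative to the whole sphere and ball. Applying this to the cap $A_r:=\{\bz\in\SS^k:\|\bz-\bu\|\leq r\}$, it suffices to find a ball of radius $\lesssim r$ containing $C(A_r)$. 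That is not immediate, because the cone contains the origin and so has diameter about $1$. So I will not contain the whole cone; instead I will work directly with the spherical geometry and bound the cap by a $k$-dimensional quantity.

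The concrete route is to project the cap onto the tangent hyperplane $\bu^\perp$. Write $\bz=s\bu+\sqrt{1-s^2}\,\bm{\theta}$ with $\bm{\theta}\in\SS^{k}\cap\bu^\perp$. The condition $\|\bz-\bu\|\leq r$ is equivalent to $s\geq 1-r^2/2$. Using the standard formula $d\mu_k\propto(1-s^2)^{(k-2)/2}\,ds\,d\bm{\theta}$, the cap measure equals
\[
\frac{\int_{1-r^2/2}^{1}(1-s^2)^{(k-2)/2}\,ds}{\int_{-1}^{1}(1-s^2)^{(k-2)/2}\,ds}~.
\]
Substituting $s=\cos\phi$ turns this into $\int_0^{\phi_r}\sin^{k-1}\phi\,d\phi\big/\int_0^{\pi}\sin^{k-1}\phi\,d\phi$, where $\phi_r=2\arcsin(r/2)$ is the angular radius of the cap.

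Next I bound the numerator and the denominator separately. For the numerator, $\sin\phi\le\phi$ gives $\int_0^{\phi_r}\sin^{k-1}\phi\,d\phi\le\phi_r^k/k$, and $\phi_r\le \pi r/3\cdot$ (some constant) $\le 1.05\,r$ for $r<1$. For the denominator, $\int_0^\pi\sin^{k-1}\phi\,d\phi\ge 2\int_0^{\pi/2}(2\phi/\pi)^{k-1}d\phi=\pi/k$, using $\sin\phi\geq 2\phi/\pi$ on $[0,\pi/2]$. Combining the two bounds gives a cap measure of at most $\phi_r^k/\pi\le(1.05r)^k/\pi\le(2r)^k$.

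The main obstacle is the edge case $k=1$, where the exponent $k-1=0$ and the density $(1-s^2)^{(k-2)/2}$ is singular. There I will check directly on the circle: an arc of angular half-width $\phi_r$ has normalized measure $2\phi_r/(2\pi)=\phi_r/\pi\le 2r$. This is consistent with the general angular formula, so the same argument works once I write everything in terms of the angle $\phi$ and avoid the $s$-density. Beyond that, the only care needed is the constant comparison $\phi_r\le 2r$, which is generous.
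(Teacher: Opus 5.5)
Your argument is correct and is essentially the paper's proof. You express the cap measure as $\int_0^{2\arcsin(r/2)}\sin^{k-1}\phi\,d\phi\big/\int_0^\pi\sin^{k-1}\phi\,d\phi$, bound the numerator using $\sin\phi\le\phi$, and bound the denominator below by $\pi/k$ using $\sin\phi\ge 2\phi/\pi$. The cone detour is unnecessary, the angular formula already covers $k=1$ without separate treatment, and your garbled constant step is cleanly stated as $2\arcsin(r/2)\le \pi r/3\le 2r$ for $r<1$ (by convexity of $\arcsin$ on $[0,1/2]$), which is all that is needed.
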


\begin{proof}[Proof of Lemma~\ref{lem: cap measure}]

We can assume that $r\leq \half$, since otherwise the statement is trivial (as the right hand side is larger than $1$).
A standard formula for the spherical cap measure (e.g., \citealp{li2011concise}) is
\[
\mu_k(\{\bz\in\SS^k:\|\bz-\bu\|\leq r\})
=\frac{\int_{0}^{2\arcsin(r/2)}\sin^{k-1}(s)ds}{\int_{0}^{\pi}\sin^{k-1}(s)ds}~.
\]
Bounding the numerator and the denominator separately, we have
\begin{align*}
\int_{0}^{2\arcsin(r/2)}\sin^{k-1}(s)ds
\leq \int_{0}^{2\arcsin(r/2)} s^{k-1} ds
=\frac{(2\arcsin(r/2))^k}{k}
\leq \frac{(2r)^k}{k}
~,
\end{align*}
and
\[
\int_{0}^{\pi}\sin^{k-1}(s)ds
=2\int_{0}^{\pi/2}\sin^{k-1}(s)ds
\geq 2\int_{0}^{\pi/2}\left(\frac{2s}{\pi}\right)^{k-1}ds
=2\left(\frac{2}{\pi}\right)^{k-1}\frac{(\pi/2)^k}{k}
=\frac{\pi}{k}>\frac{2}{k}~,
\]
so overall
\[
\mu_k(\{\bz\in\SS^k:\|\bz-\bu\|\leq r\})
\leq \frac{(2r)^k/k}{2/k}\leq (2r)^k~.
\]

\end{proof}

\end{document}